\newtheorem{thm}{Theorem}[section]
\newaliascnt{theo}{thm}
\newtheorem{theo}[theo]{Theorem}
\newaliascnt{cor}{thm}
\newtheorem{cor}[cor]{Corollary}
\newaliascnt{prop}{thm}
\newtheorem{prop}[prop]{Proposition}
\newaliascnt{lem}{thm}
\newtheorem{lem}[lem]{Lemma}
\newaliascnt{conj}{thm}
\newtheorem{conj}[conj]{Conjecture}
\newaliascnt{que}{thm}
\newaliascnt{notn}{thm}
\newtheorem{notn}[notn]{Notation}
\theoremstyle{remark}
\newaliascnt{rem}{thm}
\newtheorem{rem}[rem]{Remark}
\theoremstyle{assumption}
\newaliascnt{ass}{thm}
\theoremstyle{definition}
\newtheorem{defn}[thm]{Definition}
\newtheorem{exmp}[thm]{Example}
\newcommand{\Z}{\mathbb{Z}\xspace}
\newcommand{\Q}{\mathbb{Q}\xspace}
\DeclareMathOperator{\Spec}{Spec}
\DeclareMathOperator{\res}{res}
\DeclareMathOperator{\nd}{nd}
\DeclareMathOperator{\ord}{ord}
\DeclareMathOperator{\dv}{div}
\DeclareMathOperator{\alb}{alb}
\DeclareMathOperator{\img}{Im}
\DeclareMathOperator{\Pic}{Pic}
\DeclareMathOperator{\Gal}{Gal}
\DeclareMathOperator{\Alb}{Alb}
\DeclareMathOperator{\CH}{CH}
\DeclareMathOperator{\RS}{RS}
\DeclareMathOperator{\pr}{pr}
\DeclareMathOperator{\id}{id}
\DeclareMathOperator{\Ext}{Ext}
\DeclareMathOperator{\Ind}{Ind}
\let\c@equation\c@thm
\numberwithin{equation}{section}
\title{Filtrations of the Chow group of zero-cycles on abelian varieties and behavior under isogeny}
\author[*]{Evangelia Gazaki*} \address[*]{\normalfont Department of Mathematics, University of Virginia,  221 Kerchof Hall, 141 Cabell Dr., Charlottesville, VA, 22904, USA. Email: \texttt{eg4va@virginia.edu}}
\begin{document}

\maketitle

\begin{abstract}
For an abelian variety  $A$ over a field $k$  the author defined in \cite{Gazaki2015} a Bloch-Beilinson type filtration $\{F^r(A)\}_{r\geq 0}$ of the Chow group of zero-cycles, $\CH_0(A)$, with successive quotients related to a Somekawa $K$-group. 
In this article we show that this filtration behaves well with respect to isogeny, and in particular if $n:A\to A$ is the multiplication by $n$ map on $A$, then its push-forward $n_\star$ is given on the quotient $F^r/F^{r+1}$ by multiplication by $n^r$. In the special case when $A=E_1\times\cdots\times E_d$ is a product of elliptic curves, we show that this filtration agrees with a filtration defined by Raskind and Spiess and with the Pontryagin filtration previously considered by Beauville and Bloch. We also obtain some results in the more general case when $A$ is isogenous to a product of elliptic curves. When $k$ is a finite extension of  $\Q_p$, using Jacobians of curves isogenous to products of elliptic curves, we give new evidence for a conjecture of Raskind and Spiess and Colliot-Th\'{e}l\`{e}ne, which predicts that the kernel of the Albanese map is the direct sum of a divisible group and a finite group.


\end{abstract}
\bigskip
\textbf{Keywords:} Zero-cycles, Abelian varieties, $p$-adic fields, Somekawa $K$-groups. 

\textbf{Mathematics Subject Classification:} 19E15, 14K15, 14G20. 

\section{Introduction} 
Let $X$ be a smooth projective variety over a field $k$. We consider the Chow group of zero-cycles $\CH_0(X)$. The conjectures of Beilinson and Bloch (\cite{Beilinson1984, Bloch1984}) predict that this group has a finite filtration 
\[\CH_0(X)\supset F^1(X)\supset F^2(X)\supset\cdots\supset F^N(X)\supset 0\] arising from a spectral sequence in the conjectural abelian category $\mathcal{MM}$ of mixed motives. The first two pieces of the filtration are well-known; namely  
$F^1(X)$ is the kernel of the degree map, $\CH_0(X)\xrightarrow{\deg}\Z$, and $F^2(X)$ is the kernel of the Albanese map, $F^1(X)\xrightarrow{\alb_X}\Alb_X(k)$. For products of curves and abelian varieties such filtrations are classically known for the group $\CH_0(X)\otimes\Q$ by using the decomposition of the diagonal. In fact, for a product of curves $X=C_1\times\cdots\times C_d$, Raskind and Spiess  \cite{Raskind/Spiess2000}) constructed an  integral filtration, \[\CH_0(X)\supset F^1(X)\supset\cdots\supset F^d(X),\] with successive quotients isomorphic to a certain Somekawa $K$-group (\cite{Somekawa1990}). 

For an abelian variety $A$ on the other hand, a natural filtration $\{G^r(A)\}_{r\geq 0}$ of $\CH_0(A)$ arises  by using the \textit{Pontryagin product} of $\CH_0(A)$. The filtration $\{G^r(A)\otimes\Q\}_{r\geq 0}$ has been previously studied extensively by Beauville (\cite{Beauville1986}) and Bloch (\cite{Bloch1976}). Beauville proved the following two key properties. First, that $G^{d+1}(A)\otimes\Q=0$, where $d=\dim(A)$. Second, for every positive integer $n$ the push-forward $\CH_0(A)\otimes\Q\xrightarrow{n_\star} \CH_0(A)\otimes\Q$ of the multiplication by $n$ map $A\xrightarrow{n}A$ is given on the successive quotients $(G^r(A)/G^{r+1}(A))\otimes\Q\xrightarrow{n_\star} (G^r(A)/G^{r+1}(A))\otimes\Q$ by mutliplcation by $n^r$. 

 In \cite{Gazaki2015} the author constructed an integral descending (a priori infinite) filtration $\{F^r(A)\}_{r\geq 0}$ of $\CH_0(A)$  for an abelian variety $A$, which agrees with the Beauville-Bloch filtration $G^r(A)$ after $\otimes\Q$. The distinctive property of this new construction is that the successive quotients $F^r/F^{r+1}$ have an interpretation as a Somekawa $K$-group. We note that it has been shown by Kahn and Yamazaki (cf.~\cite[(1.1)]{KahnYamazaki2013}) that these $K$-groups have the correct motivic interpretation as motivic cohomology groups\footnote{In fact, they are $\Ext$ groups in the category of effective motivic complexes of Voevodsky.}. Thus, to the extend to which the category $\mathcal{MM}$ has been constructed, the above constructions provide good candidates for the Bloch-Beilinson filtration. 

The first purpose of this article is to prove some key properties of the filtration $\{F^r(A)\}_{r\geq 0}$ constructed in \cite{Gazaki2015}.  
\begin{prop}\label{isogenyintro} (cf.~\autoref{isogeny1}) The filtration $\{F^r(A)\}_{r\geq 0}$ behaves well with respect to isogeny. Namely, if $\varphi:A\to B$ is an isogeny between abelian varieties, then $\varphi_\star(F^r(A))\subset F^r(B)$, for all $r\geq 0$. 
\end{prop}
The proof of \autoref{isogenyintro} yields the following important corollary. 
\begin{cor}\label{n*intro} (cf.~\autoref{n*}) For every $n\geq 1$, the push-forward $n_\star:\CH_0(A)\to\CH_0(A)$ is given on $F^r(A)/F^{r+1}(A)$ by multiplication by $n^r$. 
\end{cor}
This result shows that the key motivic property of $\{G^r(A)\otimes\Q\}_{r\geq 0}$ proved by Beauville holds integrally for the filtration $\{F^r(A)\}_{r\geq 0}$.

Next, we compare the filtration $\{F^r(A)\}_{r\geq 0}$ to the Raskind-Spiess filtration in the case of overlap, namely when $X=E_1\times\cdots\times E_d$ is a product of elliptic curves. In order to distinguish between the two constructions we will denote by $\{F^r_{\RS}(X)\}_{r\geq 0}$ the Raskind-Spiess filtration.
\begin{theo}\label{ellipticcompareintro} (cf.~\autoref{ellipticcompare}, \autoref{special fields}) Suppose $X=E_1\times\cdots\times E_d$ is a product of elliptic curves. Then the filtrations $F^r(X), G^r(X), F^r_{\RS}(X)$ all agree after $\otimes\Z[1/2]$. The result holds integrally when the base field $k$ is algebraically closed, or a finite extension of $\Q_p$, where $p$ is an odd prime and the elliptic curves $E_1,\ldots,E_d$ have good reduction. 
\end{theo}
In the more general case of an abelian variety $A$ isogenous to a product of elliptic curves we obtain the following result. 

\begin{theo}\label{maincompareintro} (cf.~\autoref{maincompare}) Let $A$ be a self-dual abelian variety admitting an isogeny $\varphi:A\to X$ to a product $X$ of elliptic curves. Suppose that $\check{\varphi}\circ\varphi=n$, where $\check{\varphi}$ is the dual isogeny. Then $\displaystyle F^r(A)\otimes\Z\left[\frac{1}{2n}\right]=G^r(A)\otimes\Z\left[\frac{1}{2n}\right]$ for all $r\geq 0$. 
\end{theo}

 We note that the proof of \autoref{ellipticcompareintro} is not immediate. It requires various properties of the Somekawa $K$-group and a careful comparison of the two constructions in \cite{Raskind/Spiess2000} and \cite{Gazaki2015}, which are a priori very different. Lastly, \autoref{maincompareintro} improves in this special case the equality $F^r(A)\otimes\Q=G^r(A)\otimes\Q$ shown in \cite[Corollary 4.4]{Gazaki2015}. It is not unreasonable to expect that the filtration $\{F^r(A)\}_{r\geq 0}$ agrees integrally with the Beauville-Bloch filtration and an integral vanishing $G^{d+1}(A)=0$ holds (cf.~\autoref{interchange}).

\subsection{Results over $p$-adic fields} We next focus on the case when  $k$ is a finite extension of the $p$-adic field $\Q_p$. In this case for a smooth projective variety $X$ over $k$ the kernel $F^2(X)$ of the Albanese map is conjectured to have the following structure.
\begin{conj}\label{mainconj} (cf.~\cite[Conjecture 3.5.4]{Raskind/Spiess2000}, \cite[1.4 (d),(e),(f)]{Colliot-Thelene1995}) The group $F^2(X)$ is the direct sum of a divisible group and a finite group. 
\end{conj}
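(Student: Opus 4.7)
The plan is to establish \autoref{mainconj} in the setting of this paper, where $X$ is either an abelian variety isogenous to a product $E_1\times\cdots\times E_d$ of elliptic curves, or a product $C_1\times\cdots\times C_d$ of curves whose Jacobians are such a product, all over a $p$-adic field $k$ and under a good reduction hypothesis on the $E_i$. The first step is a series of reductions: by \autoref{special fieldsintro} the push-forward along an isogeny $\varphi\colon A\to X$ identifies $F^2(A)$ with $F^2_{\RS}(X)$ up to controlled $n$-torsion, and for a product of curves the natural map $C_1\times\cdots\times C_d\to J_1\times\cdots\times J_d$ induces a surjection on Albanese kernels. Since the property of being a direct sum of a divisible group and a finite group is preserved under passing to subgroups of finite index and to quotients by finite subgroups, it suffices to handle the case $X=E_1\times\cdots\times E_d$.

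For this $X$ the Raskind-Spiess filtration gives a finite decreasing sequence $F^2_{\RS}(X)\supset F^3_{\RS}(X)\supset\cdots\supset F^{d+1}_{\RS}(X)=0$ with successive quotients isomorphic to $\bigoplus_{i_1<\cdots<i_r}K(k;E_{i_1},\ldots,E_{i_r})$. The strategy is to prove by downward induction on $r$ that each $F^r_{\RS}(X)$ is a direct sum of a divisible group and a finite group. The base case $r=d$ reduces to showing that the top Somekawa $K$-group $K(k;E_1,\ldots,E_d)$ has this form, which under the good reduction hypothesis one would extract from the analysis via symbol maps into Galois cohomology together with the Merkurjev-Suslin/Bloch-Kato theorem in the relevant weight. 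For the inductive step one has a short exact sequence
\[
0\longrightarrow F^{r+1}_{\RS}(X)\longrightarrow F^r_{\RS}(X)\longrightarrow\bigoplus_{i_1<\cdots<i_r}K(k;E_{i_1},\ldots,E_{i_r})\longrightarrow 0,
\]
with both outer terms of the desired form by induction.

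The main obstacle is the extension problem: a group whose subgroup and quotient are both of the form divisible-plus-finite is in general only divisible-plus-(torsion of finite exponent). The plan to overcome this is to upgrade the structural statement for each layer to the stronger assertion that the torsion subgroup has \emph{finite exponent} and is \emph{cofinitely generated}, so that the non-divisible quotient is genuinely finite. For the prime-to-$p$ part this follows from Mattuck's description of the $\ell$-adic part of $E_i(k)$; for the $p$-primary part one uses finiteness of the Galois cohomology of the formal groups attached to the $E_i$ and of their multilinear analogues realizing the mixed Somekawa $K$-groups, exploiting the odd prime assumption to bypass $2$-torsion subtleties. Given these quantitative inputs at each layer, the classical structure theorem for abelian groups whose maximal divisible subgroup has finite cokernel upgrades the layer-wise decomposition to a decomposition for $F^2_{\RS}(X)$ itself, and then for $F^2(X)$ via the reductions of the first paragraph.
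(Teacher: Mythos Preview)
The statement you are attempting to prove is stated in the paper as a \emph{conjecture}, not a theorem; the paper does not claim to prove it in general. What the paper actually establishes (in \autoref{main1} and \autoref{main2}) is a weaker conclusion under restrictive reduction hypotheses: $F^2$ is the direct sum of its maximal divisible subgroup and a torsion group of \emph{finite exponent}, upgraded to genuine finiteness only when $\dim=2$ via Colliot-Th\'el\`ene's theorem that $\CH_0(S)_N$ is finite for a $p$-adic surface. So there is no ``paper's own proof'' of \autoref{mainconj} to compare against; your proposal, if it worked, would be strictly stronger than what the paper achieves in dimension $\geq 3$.

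The gap in your plan is exactly the extension problem you yourself flag. Your proposed fix---upgrading each layer to ``torsion of finite exponent and cofinitely generated'' and then invoking a ``classical structure theorem''---is not substantiated. For the $p$-primary part of the Somekawa $K$-groups in higher length, the known results (those cited in the paper from \cite{Gazaki/Leal2022} and \cite{Raskind/Spiess2000}) give finiteness of $\varprojlim_n K/p^n$ only after possibly enlarging the base field and under the specific reduction hypotheses, and they do so by a direct analysis of Mackey products of elliptic curves, not via Merkurjev--Suslin/Bloch--Kato. No general mechanism is known that forces the extension of two divisible-plus-finite groups to again be divisible-plus-finite; the paper explicitly stops at ``bounded torsion'' for this reason. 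Your first-paragraph reductions are also looser than you suggest: the map $C_1\times\cdots\times C_d\to J_1\times\cdots\times J_d$ is not what the paper uses (it works directly with the Raskind--Spiess decomposition of $\CH_0(C_1\times\cdots\times C_d)$), and passing along an isogeny gives a quotient that is torsion of exponent $n$, not a finite-index subgroup, so one only inherits bounded torsion, not finiteness, from the target.

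In short: the paper's route to its partial results is to quote the finiteness of $F^2(E_1\times\cdots\times E_d)_{\nd}$ from \cite{Gazaki/Leal2022} as a black box, transport it along the isogeny using \autoref{divisibilitylemmas} and \autoref{basechange} (which only preserve ``bounded torsion''), and then invoke \cite{CT93} in dimension two. Your inductive filtration argument is a reasonable outline for how one might \emph{hope} to attack the conjecture, but the crucial step---controlling the extensions---remains open and is precisely why \autoref{mainconj} is still a conjecture.
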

To simplify the statements that follow we introduce some notation (cf.~\autoref{nd}). The maximal divisible subgroup of $F^2(X)$ injects in it as a direct summand. We denote by $F^2(X)_{\nd}$ the non-divisible quotient, viewed as a direct summand of $F^2(X)$. \autoref{mainconj} is then equivalent to saying that $F^2(X)_{\nd}$ is a finite group.
 This conjecture is due to Raskind and Spiess (\cite{Raskind/Spiess2000}) motivated by earlier considerations of Colliot-Th\'{e}l\`{e}ne (\cite{Colliot-Thelene1995}). 
A weaker version of this conjecture has been proved by S. Saito and K. Sato (\cite{Saito/Sato2010}), who proved that the group $F^1(X)$ is the direct sum of a finite group and a group divisible by any integer $m$ coprime to $p$. 
 Raskind and Spiess proved \autoref{mainconj} for products $X=C_1\times\cdots\times C_d$ of curves all of whose Jacobians have a mixture of good ordinary and split multiplicative reduction (\cite{Raskind/Spiess2000}). In \cite{Gazaki/Leal2022} the author in joint work with I. Leal proved the conjecture for products $X=E_1\times\cdots\times E_d$ of elliptic curves allowing one curve to have good supersingular reduction (cf.~\cite[Theorem 1.2]{Gazaki/Leal2022}). Moreover, in the same article we showed that when all elliptic curves have good ordinary reduction, the $p$-adic cycle map to \'{e}tale cohomology becomes injective after a finite base change (cf.~\cite[Theorem 1.4]{Gazaki/Leal2022}).  In \autoref{padicsection} we extend these results to the following situation.


\begin{theo}\label{mainintro2} Let $X=C_1\times\cdots\times C_d$ be a product of smooth projective curves over a $p$-adic field $k$, where $p$ is an odd prime.  Let $J_i$ be the Jacobian variety of $C_i$ for $i=1,\ldots,d$. Suppose that there exists a finite extension $K/k$ over which for each $i\in\{1,\ldots,d\}$ there exists an isogeny $\psi_i:J_i\otimes_k K\to E_{i_1}\times\cdots\times E_{i_{r_i}}$  to a product $Y_i:=E_{i_1}\times\cdots\times E_{i_{r_i}}$ of elliptic curves all of which have either good reduction or split multiplicative reduction. Suppose that for each $i\in\{1,\ldots,d\}$, $\check{\psi}_i\circ\psi_i=n_i$ is an integer coprime to $p$. 
\begin{enumerate}
\item Suppose that for at most one $i\in\{1,\ldots,d\}$ there exist indexes $i_s$ with $s\in\{1,\ldots, r_i\}$ such that the elliptic curve $E_{i_s}$ has good supersingular reduction.  
 Then the group $F^2(X)_{\nd}$ is torsion of finite exponent and there exists a finite extension $K'/K$ such that $F^2(X\otimes_k K')_{\nd}$ is a finite group. When $\dim(X)=2$, the group $F^2(X)_{\nd}$ is finite without having to extend the base field.
\item Under the same assumptions as in part (1), if $K=k$ is an unramified extension of $\Q_p$ and all the elliptic curves involved have good reduction, then $F^2(X)_{\nd}$ is zero. 
\item Suppose that each $Y_i$ has potentially good ordinary reduction. Then there exists a finite extension $L/k$ such that the cycle class map 
\[\CH_0(X_L)/p^n\xrightarrow{c_{p^n}} H^{2d}_{\text{\'{e}t}}(X_L,\mu_{p^n}^{\otimes d})\] to \'{e}tale cohomology is injective for every $n\geq 1$. 
\end{enumerate}
\end{theo} 
 \autoref{mainintro2} provides new infinite classes of examples that satisfy \autoref{mainconj}. We note that the above theorem and the results obtained in \cite{Raskind/Spiess2000, Gazaki/Leal2022} are the only evidence we have for the full \autoref{mainconj}. 
Additionally, the injectivity of the cycle map allows us to often fully compute the non-divisible summand of $F^2(X)$. 
 
 For an abelian variety $A$ isogenous to such a product of elliptic curves we obtain the following weaker result, using the computations from \autoref{comparesection}. 
 
\begin{prop}\label{mainabintro} (cf.~\autoref{mainab}) 
Let $A$ be a self-dual abelian variety 
 over a $p$-adic field $k$, where $p$ is an odd prime. Suppose that $A$ admits an isogeny $\varphi:A\to E_1\times\cdots\times E_d$ to a product of elliptic curves such that $\check{\varphi}\circ\varphi=n$ is an integer coprime to $p$. Assume further that each $E_i$ has good reduction and at most one $E_i$ has good supersingular reduction. Then $F^2(A)_{\nd}$ is torsion. 
 \end{prop} 
 This result generalizes \cite[Theorem 1.2]{Gazaki2019}, allowing some degree of supersingularity. To relate this to the results of the previous subsection, we note that if an integral vanishing $G^{d+1}(A)=0$ could be proved for the abelian variety $A$, this would automatically improve these results, implying that \autoref{mainconj} is true for $A$. 
 
\subsection*{Finding explicit Examples}\label{examples}  Let $X=E_1\times E_2$ be a product of two elliptic curves over a number field $k$ with fully rational $2$-torsion. We may assume that $E_1, E_2$ are given by Weierstrass equations of the form $y^2=x(x-a)(x-b)$ and $y^2=x(x-c)(x-d)$ respectively. 
Scholten (cf.~\cite[Theorem 1]{Scholten}), using the Kummer surface associated to $X$, constructed a hyperelliptic curve $C$ given by the equation
$(ad-bc)y^2=((a-b)x^2-(c-d))(ax^2-c)(bx^2-d)$, which admits dominant morphisms 
$h_i:C\to E_i$ for $i=1,2$. In the most general case the curve $C$ is smooth of genus $2$ (cf.~\cite[p.~6]{Scholten}), and hence its Jacobian $J$ is an abelian surface  admitting an isogeny $J\xrightarrow{\varphi} E_1\times E_2$. One can use this construction and variations of it to obtain infinitely many new classes of examples that satisfy \autoref{mainconj}. In \autoref{computations} we give some explicit computations.

\subsection{Notation}  For an abelian group $B$ we will denote by $B_{\dv}$ its maximal divisible subgroup, and for every integer $n\geq 1$ we will denote by $B_n$ the subgroup of $n$-torsion points of $B$. When $A$ is an abelian variety we will instead use the notation $A[n]$.  For a variety $X$ over a field $k$ we will denote by $k(X)$ the function field of $X$ and for a closed point $x\in X$, $k(x)$ will be its residue field. For a field extension $K/k$ we will denote by $X_K$ the base change to $K$ and by $\pi_{K/k}:X_K\to X$ the projection. This projection induces a flat pullback $\pi_{K/k}^\star:\CH_0(X)\to\CH_0(X_K)$, and if $K/k$ is finite, also a proper push-forward $\pi_{K/k\star}:\CH_0(X_K)\to\CH_0(X)$.  For a variety $X$ over $k$ we will denote by $X^{(0)}$ the set of closed points of $X$. For varieties $X_1,\ldots, X_r$ over $k$ the product $X_1\times\cdots X_r$ will always be assumed to be over $\Spec k$. 

\subsection{Acknowledgement} The author's research was partially supported by the NSF grants DMS \#2001605 and DMS \#2305231. I would like to thank Jonathan Love for useful discussions. I am also truly grateful to the referee, whose substantial and detailed comments helped improve significantly the paper. 

\vspace{3pt}
\section{Comparing filtrations}\label{comparesection}
In this section we compare the various filtrations of zero-cycles for abelian varieties. 
We start by reviewing the definitions of the filtrations considered in \cite{Bloch1976, Beauville1986, Raskind/Spiess2000, Gazaki2015}. 
\subsection{Background} 
In this subsection $k$  will be an arbitrary perfect field. 
\subsection*{The Somekawa $K$-group} 
Let $A_1,\ldots, A_r$ be abelian varieties over $k$. The Somekawa $K$-group $K(k;A_1,\ldots, A_r)$ attached to $A_1,\ldots, A_r$ is defined to be the quotient 
\[K(k;A_1,\ldots, A_r):=\frac{(A_1\otimes^M\cdots\otimes^M A_r)(k)}{(\mathbf{WR})},\] where $(A_1\otimes^M\cdots\otimes^M A_r)(k)$ is the product of Mackey functors (cf.~\cite[Definition 2.7]{GazakiLove2022}) and $(\mathbf{WR})$ is a relation known as \textit{Weil reciprocity}. The generators of $K(k;A_1,\ldots, A_r)$ are denoted as symbols $\{a_1,\ldots,a_r\}_{L/k}$ where $L/k$ ranges through all finite extensions and $a_i\in A_i(L)$ for $i=1,\ldots,r$. The proof of \autoref{skew} will require the precise definition of the subgroup $(\mathbf{WR})$, which we review here. Let $C$ be a smooth projective curve over $k$. 
		For each closed point $x\in C^{(0)}$,  let $\iota_x\in C(k(x))$ be the canonical inclusion $\Spec k(x)\to C$ (induced by the quotient map $\mathcal{O}_{C,x}\to k(x)$ and the embedding $\Spec \mathcal{O}_{C,x}\to C$, where $\mathcal{O}_{C,x}$ is the local ring at $x$), and let the extension $k\to k(x)$ be determined by composing $\iota_x$ with the structure morphism $C\to\Spec k$.  
		Suppose that for each $i\in\{1,\ldots,r\}$ there are regular maps $g_i:C\to A_i$, so that $g_i\circ \iota_x\in A_i(k(x))$. Then for every $f\in k(C)^\times$ we require
			
			\begin{equation}\label{WR}
				\sum_{x\in C^{(0)}}\ord_x(f) \{g_1\circ \iota_x,\ldots, g_r\circ \iota_x\}_{k(x)/k}\in(\mathbf{WR}).
			\end{equation} 
			
\begin{notn}  Let $A$ be an abelian variety over $k$. For $r\geq 1$ we denote by $K_r(k;A)$ the Somekawa $K$-group $K(k;\underbrace{A,\ldots,A}_r)$ attached to $r$ copies of $A$. Moreover, we denote by $S_r(k;A)$ the quotient of $K_r(k;A)$ by the action of the symmetric group $\Sigma_r$ in $r$ variables. For $r=0$ we set $K_0(k;A)=S_0(k;A)=\Z$.  Note that for $r=1$ we have isomorphisms $K_1(k;A)=S_1(k;A)\simeq A(k)$ (cf.~\cite[Corollary 3.7]{Gazaki2015}). 
\end{notn}
\begin{notn}\label{closedpoint} Let $Y$ be a smooth projective variety over $k$ and $y\in Y(k)$. Then $y$ gives rise to a closed point $\tilde{y}$ of $Y$. We will abuse notation and denote this closed point by $y$ and by $[y]$ the induced zero-cycle in $\CH_0(Y)$. More generally, let $L/k$ be a finite extension and let $y\in Y(L)$. Then $y$ induces a zero-cycle $[y]_L\in\CH_0(X_L)$ of degree one and a zero-cycle $\pi_{L/k\star}([y]_L)\in\CH_0(X)$ of degree $[L:k]$. 
\end{notn}

\subsection*{The Raskind-Spiess filtration}

 Let $X=E_1\times\cdots\times E_d$ be a product of elliptic curves over $k$.  Raskind and Spiess (\cite{Raskind/Spiess2000}) showed an isomorphism 
\[f:\bigoplus_{\nu=0}^d\bigoplus_{1\leq i_1<\cdots<i_\nu\leq d}K(k;E_{i_1},\ldots, E_{i_\nu})\xrightarrow{\simeq}\CH_0(X).\] We will make this isomorphism more explicit. 
For $i\in\{1,\ldots,d\}$ we will denote by $0_i\in E_i(k)$ the zero element of $E_i$, and for a finite extension $L/k$, $0_{iL}$ will be the zero element of $(E_{i})_L$, which is nothing but $\res_{L/k}(0_i)$. Let $\pi_{iL}:X_L\to (E_{i})_L$ be the projection for $i=1,\ldots,d$. These induce pullback maps $\pi_{iL}^\star:\Pic((E_{i})_L)\to\Pic(X_L)$. Each rational point $x_i\in E_{i}(L)$ induces a divisor $[x_i]_L\in\Pic((E_{i})_L)$ of degree one (cf.~\autoref{closedpoint}). 
 Let $1\leq i_1<\cdots<i_\nu\leq d$. If follows by the proofs of \cite[Theorem 2.2]{Raskind/Spiess2000} and \cite[Corollary 2.4.1]{Raskind/Spiess2000} that the homomorphism $f$ when  restricted to $K(k;E_{i_1},\ldots, E_{i_\nu})$ has the following explicit description: 
\begin{eqnarray*}
&&f: K(k;E_{i_1},\ldots, E_{i_\nu})\rightarrow \CH_0(X)\\
&&\;\;\;\;\;\;\{x_{i_1},\ldots,x_{i_\nu}\}_{L/k}\mapsto \pi_{L/k\star}( \pi_{1L}^\star(D_1)\cdot \pi_{1L}^\star(D_2)\cdot\cdots \pi_{1L}^\star(D_d)), 
\end{eqnarray*} where $\cdot$ is the intersection product and for $i=1,\ldots, d$ the divisor $D_i\in \Pic((E_{i})_L)$ is defined to be $D_{i_t}=[x_{i_t}]_L-[0_{i_t}]_L$ if $t\in\{i_1,\ldots,i_\nu\}$ and $D_i=[0_{i}]_L$ otherwise. 
Let $r\in\{1,\ldots,d\}$. Define $\displaystyle F^r_{\RS}(X):=f\left(\bigoplus_{\nu=r}^d\bigoplus_{1\leq i_1<i_2<\cdots<i_\nu\leq d}K(k;E_{i_1},\ldots, E_{i_\nu})\right)$, so that $f$ induces an isomorphism 
\begin{equation}\label{RSiso}F^r_{\RS}(X)/F^{r+1}_{\RS}(X)\simeq \bigoplus_{1\leq i_1<\cdots<i_r\leq d}K(k;E_{i_1},\ldots, E_{i_r}),
\end{equation} and $F^{d+1}_{\RS}(X)=0$. We note that $F^1_{\RS}(X)=\ker(\deg)$ and $F^2_{\RS}(X)=\ker(\alb_X)$.

\subsection*{The Gazaki filtration}
Next, let $A$ be an abelian variety over $k$. It follows by \cite[Proposition 1.1]{Gazaki2015} that for every $r\geq 0$ there is a well-defined homomorphism
\[\Phi_r:\CH_0(A)\to S_r(k;A),\;[a]\mapsto\{a,a,\ldots,a\}_{k(a)/k}.\] We define $F^0(A)=\CH_0(A)$ and for $r\geq 1$,  
$\displaystyle F^r(A):=\bigcap_{j=0}^{r-1}\ker(\Phi_r).$ Thus, we have an embedding 
\[\Phi_r: F^r(A)/F^{r+1}(A)\hookrightarrow S_r(k;A).\] Moreover, it follows by \cite[Proposition 1.2]{Gazaki2015} that for all $r\geq 0$ there is a homomorphism
\[\Psi_r:S_r(k;A) \rightarrow F^r(A)/F^{r+1}(A)\] with the property $\Phi_r\circ\Psi_r=r!$. Thus, the map $\Phi_r$ becomes an isomorphism after $\otimes\Z[1/r!]$. In what follows we won't need the precise definition of $\Psi_r$, only its properties. 

\subsection*{The Pontryagin filtration}
The filtration $\{F^r(A)\}_{r\geq 0}$ is tightly related to another well-studied filtration $\{G^r(A)\}_{r\geq 0}$ of $\CH_0(A)$ previously considered by Beauville and Bloch (\cite{Beauville1986, Bloch1976}). This filtration is obtained as follows. 
\begin{eqnarray*}
&&G^0(A)=\CH_0(A)\\
&&G^{1}(A)=\langle\pi_{L/k\star}([a]_{L}-[0]_{L}):a\in A(L)\rangle,\\
&&G^{2}(A)=\langle\pi_{L/k\star}([a+b]_{L}-[a]_{L}-[b]_{L}+[0]_{L}):a,\;b\in A(L)\rangle,\\
&&...\\
&&G^{r}(A)=\left\langle\sum_{j=0}^{r}(-1)^{r-j}\pi_{L/k\star}\left(\sum_{1\leq\nu_{1}<\dots<\nu_{j}\leq r}[a_{\nu_{1}}+\dots+a_{\nu_{j}}]_{L}\right):a_{1},\dots,a_{r}\in A(L)\right\rangle. 
\end{eqnarray*}  It follows by \cite[Proposition 3.3]{Gazaki2015} that for every $r\geq 0$ we have an inclusion $G^r(A)\subseteq F^r(A)$, which becomes an isomorphism after $\otimes\Q$ (cf.~\cite[Corollary 4.4]{Gazaki2015}). 
The reason for the name Pontryagin filtration is because the filtration $\{G^r(A)\}_{r\geq 1}$ is induced by the Pontryagin product $\odot$ on $\CH_0(A)$, a structure which makes $(\CH_0(A), +,\odot)$ a commutative unital ring. 
We briefly recall how $\odot$ is defined. Let $m:A\times A\to A$ be the multiplication map, and $\pr_i:A\times A\to A$ be the projection to the $i$-th factor for $i=1,2$. Then for $\alpha,\beta\in\CH_0(A)$ we have,
\[\alpha\odot\beta:=m_\star(\pr_1^\star(\alpha)\cdot\pr_2^\star(\beta)).\] 
Under this product the degree map $\deg:\CH_0(A)\to\Z$ becomes a ring homomorphism. 
What is important to us in this paper is that if $a,b\in A(k)$, then for the induced zero-cycles $\alpha=[a], \beta=[b]\in\CH_0(A)$, the product is given by  
\[[a]\odot[b]=[a+b]-[a]-[b]+[0].\] Using this we can rewrite the generators of $G^r(A)$ as follows,
\begin{equation}\label{product1}
G^r(A)=\left\langle\sum_{j=0}^{r}(-1)^{r-j}\pi_{L/k\star}\left(
([a_1]_L-[0]_L)\odot\cdots([a_r]_L-[0]_L)\right):a_{1},\dots,a_{r}\in A(L)\right\rangle.
\end{equation}



\begin{rem} In the case $k=\overline{k}$ the group $G^r(A)$ is the $r$-th power of $G^1(A)$, with the latter being the augmentation ideal with respect to the degree map. It was pointed to us by the referee that this might not be true when $k\neq\overline{k}$. 
\end{rem}

The following easy lemma will be used in the proof of \autoref{maincompareintro}. 
\begin{lem}\label{Grisogeny} Let $A,B$ be isogenous abelian varieties over $k$ via an isogeny $\varphi:A\to B$. Then $\varphi_\star(G^r(A))\subset G^r(B)$ for all $r\geq 0$. 
\end{lem} 
\begin{proof}
Let $L/k$ be a finite extension. We denote by $\varphi_L:A_L\to B_L$ the isogeny obtained by base change of $\varphi$. Moreover, we denote by $\pi_{L/k,A}:A_L\to A$,  $\pi_{L/k,B}:B_L\to B$ the projections. We have an equality $\varphi\circ\pi_{L/k,A}=\pi_{L/k,B}\circ\varphi_L$. For $z\in \CH_0(A_L)$ we have,
\[\varphi_\star(\pi_{L/k,B\star}(z))=(\varphi\circ\pi_{L/k,B})_\star(z)=\pi_{L/k,B\star}(\varphi_{L\star}(z)).\] Let $z\in G^r(A)$. We will show $\varphi_\star(z)\in G^r(B)$. Using the explicit generators of $G^r(A)$ and the above relation, it is enough to prove the claim for zero-cycles of the form $\displaystyle z=\sum_{j=0}^{r}(-1)^{r-j}\left(\sum_{1\leq\nu_{1}<\dots<\nu_{j}\leq r}[a_{\nu_{1}}+\dots+a_{\nu_{j}}]\right)$ with $a_{\nu_i}\in A(k)$ for all $\nu_i$. Since all closed points involved in this expression are defined over the base field, it follows that
\begin{eqnarray*}\varphi_\star(z)=&&\sum_{j=0}^{r}(-1)^{r-j}\left(\sum_{1\leq\nu_{1}<\dots<\nu_{j}\leq r}[\varphi(a_{\nu_{1}}+\dots+a_{\nu_{j}})]\right)\\=&&\sum_{j=0}^{r}(-1)^{r-j}\left(\sum_{1\leq\nu_{1}<\dots<\nu_{j}\leq r}[\varphi(a_{\nu_{1}})+\dots+\varphi(a_{\nu_{j}})]\right).
\end{eqnarray*} The second equality follows because $\varphi$ is a homomorphism. The last expression yields that $\varphi_\star(z)\in G^r(B)$. 

\end{proof}

\begin{lem}\label{compare1} Let $X=E_1\times\cdots\times E_d$ be a product of elliptic curves over $k$. Then we have an inclusion $F^r_{\RS}(X)\subseteq G^r(X)\subseteq F^r(X)$ for $r\geq 0$. 
\end{lem}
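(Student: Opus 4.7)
The plan is to reduce to establishing $F^r_{\RS}(X) \subseteq G^r(X)$, since the second inclusion $G^r(X) \subseteq F^r(X)$ is already cited from \cite[Proposition 3.3]{Gazaki2015}. Because $F^r_{\RS}(X)$ is generated, via the isomorphism $f$ described above, by the images of the Somekawa symbols $\{x_{i_1},\ldots,x_{i_\nu}\}_{L/k}$ for $\nu \geq r$, and since $G^\nu(X) \subseteq G^r(X)$ whenever $\nu \geq r$, it will suffice to show that each such image lies in $G^\nu(X)$.

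To achieve this, I would write down $\nu$ specific elements of the augmentation ideal $G^1(X)$ whose Pontryagin product reproduces the Raskind--Spiess formula. For each $t=1,\ldots,\nu$, let $\iota_{i_t}\colon E_{i_t}\hookrightarrow X$ denote the embedding into the $i_t$-th factor (sending $x\mapsto (0,\ldots,x,\ldots,0)$) and set
\[
\alpha_t := \pi_{L/k\star}\bigl([\iota_{i_t}(x_{i_t})]_L\bigr) - \pi_{L/k\star}\bigl([0]_L\bigr)\in F^1(X) = G^1(X).
\]
Each $\alpha_t$ vanishes under the degree map and hence lies in $G^1(X)$, so the Pontryagin product $\alpha_1\odot\cdots\odot\alpha_\nu$ lies in $G^\nu(X)$ by the definition of the powers of the augmentation ideal.

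The heart of the argument is to verify the identity
\[
f\bigl(\{x_{i_1},\ldots,x_{i_\nu}\}_{L/k}\bigr) \;=\; \alpha_1\odot\cdots\odot\alpha_\nu,
\]
which I would do by expanding both sides. On the Pontryagin side, $\Z$-linearity and the defining formula $\pi_{L/k\star}([a]_L)\odot\pi_{L/k\star}([b]_L)=\pi_{L/k\star}([a+b]_L)$ give
\[
\alpha_1\odot\cdots\odot\alpha_\nu \;=\; \sum_{S\subseteq\{1,\ldots,\nu\}} (-1)^{\nu-|S|}\,\pi_{L/k\star}\bigl([z_S]_L\bigr),
\]
where $z_S := \sum_{t\in S}\iota_{i_t}(x_{i_t}) \in X(L)$ is the point with coordinate $x_{i_t}$ at position $i_t$ for $t\in S$ and $0$ in every other coordinate. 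On the Raskind--Spiess side, expanding each $[x_{i_t}]_L - [0_{i_t}]_L$ and using the identity $\prod_{i=1}^d \pi_{iL}^\star([y_i]_L)=[(y_1,\ldots,y_d)]_L$ in $\CH_0(X_L)$ produces the same sum $\sum_S(-1)^{\nu-|S|}[z_S]_L$ before pushforward by $\pi_{L/k}$. Applying $\pi_{L/k\star}$ and comparing then yields the required membership $f(\{x_{i_1},\ldots,x_{i_\nu}\}_{L/k}) \in G^\nu(X)\subseteq G^r(X)$.

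The main obstacle will be the combinatorial bookkeeping in matching the two expansions, together with ensuring that the intersection-product identity $\prod_i\pi_{iL}^\star([y_i]_L)=[(y_1,\ldots,y_d)]_L$ holds on the nose in $\CH_0(X_L)$ without hidden multiplicities; this rests on the transversality of the fiber intersection $\bigcap_i\pi_{iL}^{-1}(y_i)$ at the single $L$-rational point $(y_1,\ldots,y_d)$, so that no Galois-orbit corrections intervene before pushforward along $\pi_{L/k}$.
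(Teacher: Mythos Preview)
Your proposal is correct and follows essentially the same approach as the paper: reduce to showing that $f(\{x_{i_1},\ldots,x_{i_\nu}\}_{L/k})$ equals the Pontryagin product $\pi_{L/k\star}([a_1]_L-[0]_L)\odot\cdots\odot\pi_{L/k\star}([a_\nu]_L-[0]_L)$ with $a_t=\iota_{i_t}(x_{i_t})$, which lies in $G^\nu(X)\subseteq G^r(X)$. The paper asserts this identity directly from the explicit description of $f$, whereas you spell out the expansion and the transversality check $\prod_i\pi_{iL}^\star([y_i]_L)=[(y_1,\ldots,y_d)]_L$ more carefully; this extra detail is sound and not needed beyond what the paper already takes for granted.
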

\begin{proof}  To prove $F^r_{\RS}(X)\subseteq G^r(X)$ for $r\geq 1$ it is enough to show that the image of the homomorphism $f:K(k;E_{i_1},\ldots, E_{i_\nu})\rightarrow \CH_0(X)$ is contained in $G^r(X)$ for every $\nu\in\{r,\ldots,d\}$ and every $1\leq i_1<\ldots<i_\nu\leq d$. Let $\{x_{i_1},\ldots,x_{i_\nu}\}_{L/k}$ be a generator of $K(k;E_{i_1},\ldots, E_{i_\nu})$. The explicit description of the homomorphism $f$ described earlier gives
\[f(\{x_{i_1},\ldots,x_{i_\nu}\}_{L/k})=\pi_{L/k\star}\left(([a_1]_L-[0]_L)\odot\cdots\odot([a_\nu]_L-[0]_L)\right),\] where the zero-cycle $[a_{t}]_L$ is the one induced by the $L$-rational point $a_t=(0,\ldots, x_{i_t},\ldots, 0)$ for $t\in\{1,\ldots,\nu\}$. Then by definition of the filtration $\{G^i(A)\}_{i\geq 0}$ we have \[f(\{x_{i_1},\ldots,x_{i_\nu}\}_{L/k})\in G^\nu(A)\subset G^{r}(A)\] as required. 

\end{proof}
 

\subsection{Product of Elliptic Curves}\label{ellipticsection} In this subsection we prove  \autoref{ellipticcompareintro}. 
 We start with some preparatory results. 

\begin{lem}\label{Kr} Let $A_1,\ldots, A_d$ be abelian varieties over $k$. For  $r\geq 1$ there is an isomorphism 
\begin{equation}\label{iso1}
h_r:K_r(k;A_1\times\cdots\times A_d)\xrightarrow{\simeq}\bigoplus_{(i_1,\ldots,i_r)\in\{1,\ldots, d\}^r}K(k;A_{i_1},\ldots, A_{i_r}).\end{equation}
\end{lem}
\begin{proof}
For $i\in\{1,\ldots, d\}$ let $\pr_i: A_1\times\cdots\times A_d\to A_i$ be the projection on the $i$ factor and $\varepsilon_i: A_i\to A_1\times\cdots\times A_d$ be the embedding sending $a_i\in A_i$ to $(0,\ldots, a_i,\ldots, 0)$. Since Somekawa $K$-groups satisfy covariant functoriality with respect to homorphisms of abelian varieties (cf.~\cite[p.~108]{Somekawa1990}) the maps $\pr_i$ and $\varepsilon_i$ induce homomorphisms $\varepsilon_{i_1}\otimes\cdots\otimes\varepsilon_{i_r}:K(k;A_{i_1},\ldots, A_{i_r})\to K_r(k;A_1\times\cdots\times A_d)$, and $\pr_{i_1}\otimes\cdots\otimes\pr_{i_r}:K_r(k;A_1\times\cdots\times A_d)\to K(k;A_{i_1},\ldots, A_{i_r})$ with $(\pr_{i_1}\otimes\cdots\otimes\pr_{i_r})\circ (\varepsilon_{i_1}\otimes\cdots\otimes\varepsilon_{i_r})=\id$. This induces homomorphisms
\[\sum_{(i_1,\ldots,i_r)\in\{1,\ldots, d\}^r}\varepsilon_{i_1}\otimes\cdots\otimes\varepsilon_{i_r}:\bigoplus_{(i_1,\ldots,i_r)\in\{1,\ldots, d\}^r}K(k;A_{i_1},\ldots, A_{i_r})\to K_r(k;A_1\times\cdots\times A_d),\] 
\[\bigoplus_{(i_1,\ldots,i_r)\in\{1,\ldots, d\}^r}(\pr_{i_1}\otimes\cdots\otimes\pr_{i_r}): K_r(k;A_1\times\cdots\times A_d)\to \bigoplus_{(i_1,\ldots,i_r)\in\{1,\ldots, d\}^r}K(k;A_{i_1},\ldots, A_{i_r})\] with $\displaystyle\left(\bigoplus_{(i_1,\ldots,i_r)\in\{1,\ldots, d\}^r}(\pr_{i_1}\otimes\cdots\otimes\pr_{i_r})\right)\circ \left(\sum_{(i_1,\ldots,i_r)\in\{1,\ldots, d\}^r}\varepsilon_{i_1}\otimes\cdots\otimes\varepsilon_{i_r}\right)=\id$. We can easily see that the map $\displaystyle\sum_{(i_1,\ldots,i_r)\in\{1,\ldots, d\}^r}\varepsilon_{i_1}\otimes\cdots\otimes\varepsilon_{i_r}$ is surjective, from where the isomorphism follows. 

\end{proof}

We next want to incorporate the action of the symmetric group $\Sigma_r$ in $r$-variables in the isomorphism \eqref{iso1}. Before describing the general case, we give an example that illustrates the situation. 
\begin{exmp}
When $r=d=2$ we have an isomorphism
\[K_2(k;A_1\times A_2)\simeq K_2(k;A_1)\oplus K(k;A_1,A_2)\oplus K(k;A_2,A_1)\oplus K_2(k;A_2).\] Let $\Sigma_2=\langle\sigma\rangle$ be the symmetric group in $2$ variables. We see that $\sigma$ acts on $K_2(k;A_1\times A_2)$ by 
\[\sigma\{(a_1,a_2),(a_1',a_2')\}_{L/k}=\{(a_1',a_2'),(a_1,a_2)\}_{L/k}.\] Transferring the action on the right hand side of the  isomorphism we observe the following:
\begin{itemize}
\item For $i=1,2$ we have $\sigma\cdot K_2(k;A_i)\subset K_2(k;A_i)$ and the action is given by $\sigma\{a_i, a_i'\}_{L/k}=\{a_i',a_i\}_{L/k}$. 
\item $\sigma$ acts on $K(k;A_1,A_2)\oplus K(k;A_2,A_1)$ by exchanging the two factors, $\sigma\{a_1,a_2\}_{L/k}=\{a_2, a_1\}_{L/k}$. 
\end{itemize} Thus, the group $S_r(k;A_1\times A_2)$ (which is the quotient of $K_r(k;A_1\times A_2)$ by the $\Sigma_r$-action) becomes isomorphic via $h_r$ to the group $S_2(k;A_1)\oplus K(k;A_1, A_2)\oplus S_2(k;A_2)$. 

\end{exmp}

More generally, let $\underline{i}=(i_1,\ldots, i_r)\in\{1,\ldots,d\}^r$. Let \[H_{\underline{i}}=\{\sigma\in\Sigma_r|(i_1,\ldots, i_r)=(i_{\sigma(1)},\ldots, i_{\sigma(r)})\}\] be the stabilizer of $\underline{i}$ in $\Sigma_r$. Then the isomorphism \eqref{iso1} can be rewritten as 
\begin{equation}\label{iso2} K_r(k;A_1\times\cdots\times A_d)\simeq\bigoplus_{i_1\leq i_2\leq\cdots\leq i_r\leq d}\Ind_{H_{\underline{i}}}^{\Sigma_r}K(k;A_{i_1},\ldots, A_{i_r}).
\end{equation}
\begin{defn} Let $(i_1,\ldots, i_r)\in\{1,\ldots,d\}^r$. The symmetric $K$-group $S_r(k;A_{i_1},\ldots, A_{i_r})$ attached to $A_{i_1},\ldots, A_{i_r}$ is defined to be the $\Sigma_r$-coinvariants of $\Ind_{H_{\underline{i}}}^{\Sigma_r}K(k;A_{i_1},\ldots, A_{i_r})$. 
\end{defn} 
Thus, we have
\[S_r(k;A_{i_1},\ldots, A_{i_r})=\left(\Ind_{H_{\underline{i}}}^{\Sigma_r}K(k;A_{i_1},\ldots, A_{i_r})\right)_{\Sigma_r}=K(k;A_{i_1},\ldots, A_{i_r})_{H_{\underline{i}}}.\] Notice that when $i_1<i_2<\cdots<i_r$, $H_{\underline{i}}$ is trivial on $K(k;A_{i_1},\ldots, A_{i_r})$, and hence in this case $S_r(k;A_{i_1},\ldots, A_{i_r})= K(k;A_{i_1},\ldots, A_{i_r})$. 

The following Corollary summarizes all the above. 


\begin{cor}\label{Sr} Let $A_1,\ldots, A_d$ be abelian varieties over $k$.  There is an isomorphism 
\[g_r:S_r(k;A_1\times\cdots\times A_d)\xrightarrow{\simeq}\bigoplus_{1\leq i_1\leq\cdots\leq i_r\leq d}S_r(k;A_{i_1},\ldots, A_{i_r}).\] In particular, the group $\bigoplus_{1\leq i_1<\cdots<i_r\leq d}K(k;A_{i_1},\ldots, A_{i_r})$ embeds as a direct summand on $S_r(k;A_1\times\cdots\times A_d)$. 
\end{cor}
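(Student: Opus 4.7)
The plan is to obtain $g_r$ by taking $\Sigma_r$-coinvariants on both sides of the isomorphism $h_r$ of \autoref{Kr}. Since by definition $S_r(k;B)$ is the quotient of $K_r(k;B)$ by the $\Sigma_r$-action, and each $S_r(k;A_{i_1},\ldots,A_{i_r})$ is the quotient of $K(k;A_{i_1},\ldots,A_{i_r})$ by the induced action noted in the paragraph preceding the corollary, it suffices to verify that $h_r$ is $\Sigma_r$-equivariant once the right-hand side is equipped with the natural action that preserves each summand indexed by a non-decreasing tuple $1\le i_1\le\cdots\le i_r\le d$.

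To set this up I would first recall that the Somekawa construction $K(k;-,\ldots,-)$ is symmetric in its arguments via the canonical symmetry of the tensor product of Mackey functors, so for any permutation $(j_1,\ldots,j_r)$ of $(i_1,\ldots,i_r)$ there is a canonical isomorphism $K(k;A_{j_1},\ldots,A_{j_r})\simeq K(k;A_{i_1},\ldots,A_{i_r})$. This is precisely what allows the direct sum in \autoref{Kr} to be re-indexed over non-decreasing tuples rather than over all of $\{1,\ldots,d\}^r$. Using the explicit formulas for the mutually inverse maps $\sum_{i_1\le\cdots\le i_r}\varepsilon_{i_1}\otimes\cdots\otimes\varepsilon_{i_r}$ and $\bigoplus_{i_1\le\cdots\le i_r}\pr_{i_1}\otimes\cdots\otimes\pr_{i_r}$ from \autoref{Kr}, together with covariant functoriality of the Somekawa construction under the symmetry isomorphisms, one checks on generators $\{a_1,\ldots,a_r\}_{L/k}$ that these maps intertwine the $\Sigma_r$-action on $K_r(k;A_1\times\cdots\times A_d)$ with the $\Sigma_r$-action on the right that preserves each summand. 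Passing to coinvariants then yields the desired isomorphism $g_r$.

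For the \emph{in particular} clause, when $1\le i_1<\cdots<i_r\le d$ is strictly increasing, the observation recalled just before the corollary that $\Sigma_r$ acts trivially on $K(k;A_{i_1},\ldots,A_{i_r})$ gives $S_r(k;A_{i_1},\ldots,A_{i_r})\simeq K(k;A_{i_1},\ldots,A_{i_r})$, and these terms embed as direct summands of $S_r(k;A_1\times\cdots\times A_d)$ under $g_r$. The only real content is the $\Sigma_r$-equivariance check, which is straightforward bookkeeping after decomposing each $a_j\in(A_1\times\cdots\times A_d)(L)$ as $\sum_i\varepsilon_i\pr_i(a_j)$ and expanding multilinearly; I expect this to be the main point to track, but no genuine obstacle.
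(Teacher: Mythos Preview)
Your proposal is correct and follows essentially the same approach as the paper: the paper simply declares that the corollary follows immediately from \autoref{Kr}, relying on the observation (stated just before the corollary) that each summand $K(k;A_{i_1},\ldots,A_{i_r})$ is $\Sigma_r$-invariant, so that taking $\Sigma_r$-coinvariants on both sides of $h_r$ yields $g_r$. Your writeup supplies exactly the equivariance bookkeeping that the paper leaves implicit, and your treatment of the ``in particular'' clause matches the paper's remark that the $\Sigma_r$-action is trivial on strictly increasing tuples.
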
 

\begin{prop}\label{skew} Let $A_1,\ldots, A_r$ be abelian varieties over $k$ for some $r\geq 2$. Suppose that $A_1=A_2=E$ is an elliptic curve. The following relation holds in the Somekawa $K$-group $K(k;A_1,\ldots, A_r)$.  
\[\{a_1,a_2,\ldots,a_r\}_{L/k}=-\{a_2,a_1,\ldots,a_r\}_{L/k},\] where $L/k$ ranges over all finite extensions of $k$ and $a_i\in A_i(L)$ for $i=1,\ldots,r$. In particular, the symbol $\{a,b\}$ in $K_2(E)$ is skew-symmetric. 
\end{prop}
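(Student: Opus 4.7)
The plan is to deduce the identity from a single application of the Weil reciprocity relation \eqref{WR} on the curve $C := E_L$, viewed as a smooth projective $k$-curve, with a carefully chosen function $f \in k(C)^\times = L(E)^\times$ and tuple of regular $k$-morphisms $g_i : C \to A_i$. A useful preliminary observation is that by bilinearity of the symbol in each slot (an immediate consequence of the Mackey tensor product definition), any generator $\{b_1, \ldots, b_r\}_{L/k}$ with some $b_i = 0$ is equal to zero; indeed $\{b_1,\ldots,0,\ldots,b_r\} = \{b_1,\ldots,0+0,\ldots,b_r\} = 2\{b_1,\ldots,0,\ldots,b_r\}$.

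I would take $g_1 : E_L \to E = A_1$ to be translation by $a_1$ composed with the natural projection $\pi_{L/k} : E_L \to E$; $g_2 : E_L \to E = A_2$ to be just the projection $\pi_{L/k}$; and for $i \geq 3$, $g_i$ to be the constant $k$-morphism $C \to \Spec(L) \xrightarrow{a_i} A_i$. Under these maps a closed point $b \in E_L(L)$ evaluates to $g_1(b) = a_1 + b$, $g_2(b) = b$ and $g_i(b) = a_i$. For the function, I would pick $f \in L(E)^\times$ with
\[ \dv(f) = [a_2 - a_1] + [0] - [a_2] - [-a_1], \]
which exists by Abel's theorem on $E$, since this divisor has degree zero and sums to $(a_2 - a_1) + 0 - a_2 + a_1 = 0$ in $E(L)$.

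Substituting into \eqref{WR} then produces the relation
\[ \{a_2, a_2 - a_1, a_3, \ldots, a_r\}_{L/k} + \{a_1, 0, a_3, \ldots, a_r\}_{L/k} - \{a_1 + a_2, a_2, a_3, \ldots, a_r\}_{L/k} - \{0, -a_1, a_3, \ldots, a_r\}_{L/k} = 0, \]
in which the second and fourth terms vanish by the preliminary observation. Expanding the first term via bilinearity in the second slot and the third term via bilinearity in the first slot cancels the diagonal contributions $\pm\{a_2, a_2, a_3, \ldots, a_r\}_{L/k}$, leaving $\{a_1, a_2, a_3, \ldots, a_r\}_{L/k} + \{a_2, a_1, a_3, \ldots, a_r\}_{L/k} = 0$, which is the claim. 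The case $r = 2$ then specializes to the skew-symmetry of $\{a, b\} \in K_2(E)$ asserted at the end of the proposition.

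The delicate step is guessing the correct pair $(f, (g_1, g_2))$ so that exactly two of the four evaluation points survive and expand into the desired cross terms. Abel's theorem is the essential input, and it is precisely because $A_1 = A_2 = E$ is an elliptic curve that such an $f$ is available; the argument would fail for an abelian variety of higher dimension, where one cannot prescribe the divisor this freely. Possible coincidences among the four points in $\supp(\dv(f))$ (for instance $a_1 = a_2$ or $a_1 + a_2 = 0$) only collapse terms without changing the final conclusion.
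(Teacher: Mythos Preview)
Your argument is correct and is in fact cleaner than the paper's. The paper proceeds in two steps: first it applies Weil reciprocity on $E$ with $g_1=g_2=\id_E$ and the function $f=x-x(a)$ to prove the auxiliary claim that every diagonal symbol $\{a,a,a_3,\ldots,a_r\}$ is $2$-torsion; then it applies Weil reciprocity again with the function cut out by a line through $a_1,a_2$ (divisor $[a_1]+[a_2]+[-(a_1+a_2)]-3[0]$), expands, and uses the $2$-torsion claim to kill the surviving diagonal terms. Your single application of \eqref{WR} with the asymmetric choice $g_1=$ (translation by $a_1$), $g_2=\id$, and the Abel--Jacobi divisor $[a_2-a_1]+[0]-[a_2]-[-a_1]$ is arranged so that the diagonal contributions $\pm\{a_2,a_2,\ldots\}$ cancel on the nose, bypassing the intermediate $2$-torsion lemma entirely. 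One small remark: rather than working with $C=E_L$ as a $k$-curve (which is smooth projective and integral, but not geometrically connected when $L\neq k$), it is marginally tidier to first reduce to $L=k$ via the norm, as the paper does, and then run your argument on $C=E$; this sidesteps any worry about the precise hypotheses on $C$ in \eqref{WR}.
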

\begin{proof} Note that by definition we have $\{a_1,a_2,\ldots,a_r\}_{L/k}=N_{L/k}(\{a_1,a_2,\ldots,a_r\}_{L/L})$, where $N_{L/k}$ is the norm map on Somekawa $K$-groups (cf.~\cite[p.~108]{Somekawa1990}). 
Thus, it is enough to prove the proposition for symbols $\{a_1,a_2,\ldots,a_r\}_{k/k}$ defined over $k$.   

Let $a_1, a_2\in E(k)$ and $a_i\in A_i(k)$ for $i\geq 3$. The statement is clear if either $a_1=0$ or $a_2=0$, so we may assume $a_1, a_2\neq 0$. Consider the homomorphism $\rho:E(k)\xrightarrow{\simeq}\Pic^0(E)$, $a\mapsto[a]-[0]$. It follows that $\rho(a_1+a_2)=\rho(a_1)+\rho(a_2)$. That is,  $[a_1]-[0]+[a_2]-[0]=[a_1+a_2]-[0]\in\Pic^0(E)$, and hence $[a_1]+[a_2]-[a_1+a_2]-[0]$ is the divisor of a function $f\in k(E)^\times$. We apply Weil Reciprocity $(\textbf{WR})$ of the Somekawa $K$-group for the curve $E=A_1=A_2$, the regular functions $g_i:E\to A_i$ given by $g_1=g_2=1_E$, $g_i=a_i$ constant for $i\geq 3$, and  the function $f\in k(E)^\times$ as above. The function $f$ has two simple zeros at $a_1, a_2$ and two simple poles at $0$ and $a_1+a_2$ (unless $a_1+a_2=0$, in which case the pole is double). Thus, Weil Reciprocity gives,
\begin{eqnarray*}
&&\{a_1, a_1,a_3,\ldots, a_r\}_{k/k}+\{a_2, a_2,a_3,\ldots, a_r\}_{k/k}-\{a_1+a_2, a_1+a_2,a_3,\ldots, a_r\}_{k/k}=0.
\end{eqnarray*}
Expanding the third symbol using bilinearity yields the desired relation $\{a_1,a_2,\ldots,a_r\}_{L/k}=-\{a_2,a_1,\ldots,a_r\}_{k/k}$.

\end{proof}


\begin{cor}\label{2torsion} Let $E_1,\ldots, E_d$ be elliptic curves over $k$. Let $r\geq 1$ and $1\leq i_1\leq\cdots\leq i_r\leq d$. If $i_l=i_m$ for some $l,m\in\{1,\ldots,r\}$, $l\neq m$, then the $K$-group $S_r(E_{i_1},\ldots,E_{i_r})$ is $2$-torsion. 
\end{cor}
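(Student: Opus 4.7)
The plan is to combine \autoref{skew} with the defining quotient by the symmetric group action, and the argument splits into three short steps.

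First, I would observe that \autoref{skew} extends verbatim to any pair of positions $l<m$ with $A_l = A_m = E$ an elliptic curve, yielding
\[
\{a_1,\ldots,a_l,\ldots,a_m,\ldots,a_r\}_{L/k} = -\{a_1,\ldots,a_m,\ldots,a_l,\ldots,a_r\}_{L/k}
\]
in $K(k;A_1,\ldots,A_r)$. Indeed, the proof of \autoref{skew} is position-agnostic: one applies Weil reciprocity on the curve $E$ with regular maps $g_l = g_m = \id_E$ and $g_j = a_j$ constant for $j \notin \{l,m\}$, using the same two rational functions $f = x - x(a)$ (with simple zeros at $a,-a$ and double pole at $0$) and the line $h$ through $a_l$ and $a_m$.

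Next, I would fix $l<m$ with $i_l = i_m$ and set $\tau = (l\;m) \in \Sigma_r$. Since $i_l = i_m$, the sequence $(i_1,\ldots,i_r)$ is literally preserved by $\tau$, so $\tau$ genuinely induces an endomorphism of the subgroup $K(k;E_{i_1},\ldots,E_{i_r})$ of $K_r(k;E_1\times\cdots\times E_d)$. The extended skew-symmetry of the previous step, applied at positions $l$ and $m$ with $E = E_{i_l} = E_{i_m}$, then shows that this endomorphism acts as multiplication by $-1$ on every generator $\{a_1,\ldots,a_r\}_{L/k}$.

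Finally, passing to the quotient $S_r(k;E_{i_1},\ldots,E_{i_r})$ kills the entire $\Sigma_r$ action, and in particular that of $\tau$. For every $x \in S_r(k;E_{i_1},\ldots,E_{i_r})$ we therefore get $x = \tau\cdot x = -x$, i.e.\ $2x = 0$. The whole argument is essentially a direct consequence of \autoref{skew}, so I do not anticipate any serious obstacle; the only point that deserves a moment's thought is the first step, where one must check that the Weil reciprocity computation in the proof of \autoref{skew} really is independent of which two slots carry the elliptic curve, and this is immediate by inspection.
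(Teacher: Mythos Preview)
Your argument is correct and follows essentially the same route as the paper: both combine the skew-symmetry of \autoref{skew} with the symmetry imposed in $S_r$ to conclude $x=-x$. The only cosmetic difference is that the paper first uses the canonical isomorphism $K(k;E_{i_1},\ldots,E_{i_r})\simeq K(k;E_{\sigma(i_1)},\ldots,E_{\sigma(i_r)})$ to reduce to the case $i_1=i_2$ and then invokes \autoref{skew} verbatim, whereas you instead re-run the Weil reciprocity computation at arbitrary positions $l,m$; both are equally valid.
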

\begin{proof}
Let $\sigma\in\Sigma_r$ be a permutation. Then there is a canonical isomorphism
\[K(k;E_{i_1},\ldots,E_{i_r})\simeq K(E_{\sigma(i_1)},\ldots,E_{\sigma(i_r)}).\] Thus we may assume $E_{i_1}=E_{i_2}$. Let $\{a_1,\ldots,a_r\}_{L/k}\in S_r(E_{i_1},\ldots,E_{i_r})$ be a generator. By definition of $S_r(E_{i_1},\ldots,E_{i_r})$ we have an equality $\{a_1,\ldots,a_r\}_{L/k}=\{a_2,a_1,\ldots,a_r\}_{L/k}$. It follows by \autoref{skew} that we also have $\{a_1,\ldots,a_r\}_{L/k}=-\{a_2,a_1,\ldots,a_r\}_{L/k}$. The claim thus follows.

\end{proof}

We are now ready to prove \autoref{ellipticcompareintro}, which we restate here. 

\begin{theo}\label{ellipticcompare}  Let $X=E_1\times\cdots \times E_d$ be a product of elliptic curves over a perfect field $k$. 
 The inclusion $\iota_r:F^r_{\RS}(X)\hookrightarrow F^r(X)$ of \autoref{compare1} becomes an isomorphism after $\otimes\Z[1/2]$ for all $r\geq 0$. 
 Moreover, if the Somekawa $K$-group $K(k;E_{i_1},\ldots, E_{i_r})$ is $2$-divisible for every $r\geq 2$ and every $1\leq i_1\leq\cdots\leq i_r\leq d$, then the isomorphism holds integrally. 
\end{theo}

\begin{proof}
 The inclusion $\iota_r:F^r_{\RS}(X)\hookrightarrow F^r(X)$ is an equality for $r=0,1,2$. It is enough to show that $\iota_r$ induces an isomorphism $\iota_r\otimes 1:(F^r_{\RS}(X)/F^{r+1}_{\RS}(X))\otimes\Z[1/2]\xrightarrow{\simeq} (F^r(X)/F^{r+1}(X))\otimes\Z[1/2]$ for $r\geq 0$. For, the claim will follow by induction and the five lemma applied to the following commutative diagram, 
\[\xymatrix{ 0 \ar[r] & F^{r+1}_{\RS}(X)\otimes\Z[1/2]\ar[r]\ar[d]^{\iota_{r+1}\otimes 1} & F^r_{\RS}(X)\otimes\Z[1/2]\ar[r]\ar[d]^{\iota_r\otimes 1} & F^r_{\RS}(X)/F^{r+1}_{\RS}(X)\otimes\Z[1/2]\ar[r]\ar[d]^{\iota_r\otimes 1} & 0\\
0 \ar[r] & F^{r+1}(X)\otimes\Z[1/2]\ar[r] & F^r(X)\otimes\Z[1/2]\ar[r] & F^r(X)/F^{r+1}(X)\otimes\Z[1/2]\ar[r] & 0.
}
\]

Let $r\geq 2$. \autoref{Sr} gives an isomorphism
\[\displaystyle S_r(k;E_1\times\cdots\times E_d)\xrightarrow{\simeq}\bigoplus_{1\leq i_1\leq\cdots\leq i_r\leq d}S_r(k;E_{i_1},\ldots, E_{i_r})\simeq\bigoplus_{1\leq i_1<\cdots< i_r\leq d}K(k;E_{i_1},\ldots, E_{i_r})\bigoplus H,\] where $H$ is the subgroup of $S_r(k;E_1\times\cdots\times E_d)$ consisting of all direct summands of the form $S_r(k;E_{i_1},\ldots, E_{i_r})$ where at least two of the indexes are the same.
\autoref{2torsion} yields that the group $H$ is $2$-torsion, and hence $H\otimes\Z[1/2]$ vanishes. We consider the composition
\[F^r_{\RS}(X)/F^{r+1}_{\RS}\otimes\Z[1/2]\xrightarrow{\iota_r} F^r(X)/F^{r+1}(X)\otimes\Z[1/2]\xrightarrow{\Phi_r\otimes 1}S_r(k;E_1\times\cdots\times E_d)\otimes\Z[1/2].\] We recall that the homomorphism $\Phi_r$ is injective. Moreover, we have an  equality \[\displaystyle S_r(k;E_1\times\cdots\times E_d)\otimes\Z[1/2]=\bigoplus_{1\leq i_1<\cdots< i_r\leq d}K(k;E_{i_1},\ldots, E_{i_r})\otimes\Z[1/2].\] A straightforward computation shows that $\Phi_r\circ\iota_r\otimes 1$ coincides with the isomorphism \[f:F^r_{\RS}(X)/F^{r+1}_{\RS}\otimes\Z[1/2]\xrightarrow{\simeq}\bigoplus_{1\leq i_1<\cdots< i_r\leq d}K(k;E_{i_1},\ldots, E_{i_r})\otimes\Z[1/2]\] of Raskind-Spiess. Since $\Phi_r\circ\iota_r\otimes 1$ is an isomorphism and $\Phi_r$ is injective, we conclude that $\Phi_r\otimes 1$ is surjective, and hence an isomorphism. Thus, so is $\iota_r\otimes 1$. 


To finish the proof of the theorem suppose that for all $r\geq 2$ and every $1\leq i_1\leq\cdots\leq i_r\leq d$ the Somekawa $K$-group $K(k;E_{i_1},\ldots, E_{i_r})$ is $2$-divisible. This implies that for all $1\leq i_1\leq\cdots\leq i_r\leq d$ with at least two of the indexes being the same the group $S_r(k;E_{i_1},\ldots, E_{i_r})$ is $2$-divisible and $2$-torsion, and hence it vanishes. Thus, the isomorphism 
$\displaystyle S_r(k;X)\simeq\bigoplus_{1\leq i_1<\cdots< i_r\leq d}K(k;E_{i_1},\ldots, E_{i_r})$ holds integrally. 

\end{proof} 

\begin{cor}\label{special fields} Consider the set-up of \autoref{ellipticcompare}. Suppose that $k$ is algebraically closed, or $k$ is a finite extension of $\Q_p$ with $p$ an odd prime and the elliptic curves $E_1,\ldots, E_d$ have good reduction. Then 
 \autoref{ellipticcompare} holds integrally. 
\end{cor}
\begin{proof}
Suppose first that $k$ is algebraically closed. Then each group $K(k;E_{i_1},\ldots, E_{i_r})$ is divisible, since it is a quotient of $E_{i_1}(k)\otimes\cdots\otimes E_{i_r}(k)$, which is divisible. Next, suppose that $k$ is a finite extension of $\Q_p$ with $p$ odd and $E_1,\ldots, E_d$ have good reduction. It follows by \cite[Corollary 3.5.1]{Raskind/Spiess2000} that the group $K(k;E_{i_1},\ldots, E_{i_r})$ is $m$-divisible for every integer $m$ coprime to $p$. In particular, it is $2$-divisible. In both cases (a) follows by \autoref{ellipticcompare}.  



\end{proof}

\subsection{Behavior under isogeny} In this subsection we prove \autoref{maincompareintro}. The following proposition shows that the Gazaki filtration $\{F^r(A)\}_{r\geq 0}$ behaves well under isogeny. 

\begin{prop}\label{isogeny1} Let $A,B$ be isogenous abelian varieties via an isogeny $\varphi:A\to B$. Then $\varphi_\star(F^r(A))\subset \varphi_\star(F^r(B))$, for all $r\geq 0$. That is, the push-forward map $\varphi_\star$ is a filtered homomorphism with respect to the Gazaki filtration. 
\end{prop} 
\begin{proof}
Since isogenies are proper morphisms, $\varphi$ induces a push-forward map $\varphi_\star:\CH_0(A)\to\CH_0(B)$. We will prove by induction on $r\geq 0$ that $\varphi_\star$ induces an inclusion $\varphi_\star(F^r(A))\subseteq F^r(B)$. The statement is clearly true for $r=1$. Let $r\geq 1$. We claim that there is a commutative diagram 
\[\xymatrix{ \CH_0(A)\ar[r]^{\varphi_\star}\ar[d]^{\Phi_{r,A}} & \CH_0(B)\ar[d]^{\Phi_{r,B}}\\
S_r(k;A)\ar[r]^{\varphi^{\otimes r}} & S_r(k;B).
}
\] Here the bottom map is induced by applying the isogeny $\varphi$ on each coordinate. Let $a\in A$ be a closed point and let $[a]\in\CH_0(A)$ be the induced zero-cycle. On the one hand we have, 
\[\varphi^{\otimes r}(\Phi_{r,A}([a]))=\{\varphi(a),\ldots,\varphi(a)\}_{k(a)/k}.\] On the other hand, $\varphi_\star([a])=[k(a):k(\varphi(a))][\varphi(a)]$, which yields,
\[\Phi_{r,B}(\varphi([a]))=[k(a):k(\varphi(a))]\{\varphi(a),\ldots,\varphi(a)\}_{k(\varphi(a))/k}.\] The equality of these two symbols follows by the projection formula of the Somekawa $K$-group (cf.~\cite[2.5]{Gazaki/Leal2022}). Namely, 
\begin{eqnarray*}\{\varphi(a),\ldots,\varphi(a)\}_{k(a)/k}=&&\{\res_{k(a)/k(\varphi(a))}(\varphi(a)),\ldots,\res_{k(a)/k(\varphi(a))}(\varphi(a))\}_{k(a)/k}\\=&&\{\varphi(a),\ldots,\varphi(a),N_{k(a)/k(\varphi(a))}(\res_{k(a)/k(\varphi(a)))}(\varphi(a))\}_{k(\varphi(a)/k}\\=&&[k(a):k(\varphi(a))]\{\varphi(a),\ldots,\varphi(a)\}_{k(\varphi(a))/k}.
\end{eqnarray*} By induction hypothesis the top horizontal map when restricted to $F^{r-1}(A)$ gives a homomorphism $F^{r-1}(A)\xrightarrow{\varphi_\star} F^{r-1}(B)$. By definition of the filtration $\{F^i(A)\}_{i\geq 0}$, we have $F^r(A)=\ker(\Phi_{r,A}|_{F^{r-1}(A)})$. For $z\in F^r(A)$, it follows that $\Phi_{r,B}(\varphi_\star(z))=\varphi^{\otimes r}(\Phi_{r,A}(z))=0$, which means $\varphi_\star(z)\in\ker(\Phi_{r,B}|_{F^{r-1}(B)})=F^{r}(B)$ as required. This completes the induction. 

\end{proof}

\begin{cor}\label{n*} Let $A$ be an abelian variety and $n\geq 1$ a positive integer. Let $n_\star:\CH_0(A)\to\CH_0(A)$ be the push-forward homomorphism induced by the multiplication by $n$ map on $A$. Then the induced homomorphism $n_\star:F^r(A)/F^{r+1}(A)\to F^r(A)/F^{r+1}(A)$ is given by multiplication by $n^r$. 
\end{cor}
\begin{proof} The homomorphism $n^{\otimes r}:S_r(k;A)\to S_r(k;A)$ is clearly multiplication by $n^r$, since the symbol is multilinear. Moreover, 
it follows by the proof of \autoref{isogeny1} that we have a commutative diagram 
\[\xymatrix{ F^r(A)/F^{r+1}(A)\ar[r]^{n_\star}\ar[d]^{\Phi_{r,A}} & F^r(A)/F^{r+1}(A)\ar[d]^{\Phi_{r,A}}\\
S_r(k;A)\ar[r]^{n^r} & S_r(k;A)
}
\]
with injective vertical maps. Let $z\in F^r(A)/F^{r+1}(A)$. Since $\Phi_{r,A}$ is a homomorphism, it follows that $\Phi_{r,A}(n_\star(z))=n^r(\Phi_{r,A}(z))=\Phi_{r,A}(n^rz)$. Since $\Phi_{r,A}$ is injective, we conclude that $n_\star(z)=n^rz$. 

\end{proof}

\begin{theo}\label{maincompare}  Let $A$ be a self-dual abelian variety of dimension $d$ admitting an isogeny $\varphi:A\to X=E_1\times\cdots \times E_d$ to a product of elliptic curves such that $\check{\varphi}\circ\varphi=n$, where $\check{\varphi}:X\to A$ is the dual isogeny. Then for all $r\geq 0$ we have an equality \[\displaystyle F^r(A)\otimes\Z\left[\frac{1}{2n}\right]=G^r(A)\otimes\Z\left[\frac{1}{2n}\right].\] 
\end{theo}

\begin{proof}
For every $r\geq 0$ we have inclusions $G^r(A)\hookrightarrow F^r(A)$ which are equalities for $r=0,1,2$. In order to show that 
the inclusion becomes an isomorphism after $\displaystyle\otimes\Z\left[\frac{1}{2n}\right]$ for all $r\geq 0$, it is enough to show an isomorphism $\displaystyle  F^r(A)/F^{r+1}(A)\otimes\Z\left[\frac{1}{2n}\right]=G^r(A)/G^{r+1}(A)\otimes\Z\left[\frac{1}{2n}\right]$ for $r\geq 0$. For, the claim will follow by induction and the five lemma applied to the diagram 
\[\xymatrix{ 0 \ar[r] & G^{r+1}(A)\otimes\Z\left[\frac{1}{2n}\right]\ar[r]\ar[d] & G^r(A)\otimes\Z\left[\frac{1}{2n}\right]\ar[r]\ar[d] & G^r(A)/G^{r+1}(A)\otimes\Z\left[\frac{1}{2n}\right]\ar[r]\ar[d] & 0\\
0 \ar[r] & F^{r+1}(A)\otimes\Z\left[\frac{1}{2n}\right]\ar[r] & F^r(A)\otimes\Z\left[\frac{1}{2n}\right]\ar[r] & F^r(A)/F^{r+1}(A)\otimes\Z\left[\frac{1}{2n}\right]\ar[r] & 0.
}
\]
By assumption we have $\check{\varphi}_\star\circ\varphi_\star=(\check{\varphi}\circ\varphi)_\star=n_\star$, which is given by multiplication by $n^r$ on $F^r(A)/F^{r+1}(A)$, and hence it becomes an isomorphism after $\displaystyle\otimes\Z\left[\frac{1}{2n}\right]$. We claim that this isomorphism $n_\star\otimes 1=(\check{\varphi}_\star\otimes 1)\circ (\varphi_\star\otimes 1)$ factors through the quotient $\displaystyle G^r(A)/G^{r+1}(A)\otimes\Z\left[\frac{1}{2n}\right]$. To see this we consider the homomorphism  
\[\varphi_\star\otimes 1: F^r(A)/F^{r+1}(A)\otimes\Z\left[\frac{1}{2n}\right]\to F^r(X)/F^{r+1}(X)\otimes\Z\left[\frac{1}{2n}\right].\] It follows by \autoref{ellipticcompare} that we have an equality 
\[F^r(X)/F^{r+1}(X)\otimes\Z\left[\frac{1}{2n}\right]= F^r_{\RS}(X)/F^{r+1}_{\RS}(X)\otimes\Z\left[\frac{1}{2n}\right]= G^r(X)/G^{r+1}(X)\otimes\Z\left[\frac{1}{2n}\right].\]
  Since $\check{\varphi}_\star(G^r(X))\subset G^r(A)$ (cf.~\autoref{Grisogeny}), postcomposing with the homomorphism $\check{\varphi}_\star\otimes 1$ lands in the quotient  $\displaystyle G^r(A)/G^{r+1}(A)\otimes\Z\left[\frac{1}{2n}\right]$. Thus, we obtain a factoring 
  \[n^r\otimes 1:\frac{F^r(A)}{F^{r+1}(A)}\otimes\Z\left[\frac{1}{2n}\right]\rightarrow \frac{G^r(A)}{G^{r+1}(A)}\otimes\Z\left[\frac{1}{2n}\right]\xrightarrow{\iota_r\otimes 1}\frac{F^r(A)}{F^{r+1}(A)}\otimes\Z\left[\frac{1}{2n}\right],\] where $\iota_r$ is the homomorphism induced by the inclusion $G^r(A)\hookrightarrow F^r(A)$. Since $n^r\otimes 1$ is an isomorphism, it follows that $\iota_r\otimes 1$ is surjective. To see that it's also injective consider the composition 
  \[\frac{G^r(A)}{G^{r+1}(A)}\otimes\Z\left[\frac{1}{2n}\right]\xrightarrow{\iota_r\otimes 1}\frac{F^r(A)}{F^{r+1}(A)}\otimes\Z\left[\frac{1}{2n}\right]\xrightarrow{n^r\otimes 1}\frac{F^r(A)}{F^{r+1}(A)}\otimes\Z\left[\frac{1}{2n}\right].\] We get a factoring 
  \[\frac{G^r(A)}{G^{r+1}(A)}\otimes\Z\left[\frac{1}{2n}\right]\xrightarrow{\iota_r\otimes 1}\frac{F^r(A)}{F^{r+1}(A)}\otimes\Z\left[\frac{1}{2n}\right]\rightarrow\frac{G^r(A)}{G^{r+1}(A)}\otimes\Z\left[\frac{1}{2n}\right],\] which is again given by multiplication by $n^r$, and hence this composition is an isomorphism, and hence $\iota_r\otimes 1$ is injective. 

  

\end{proof}

\begin{rem}\label{interchange} For an abelian variety $A$ over $\mathbb{C}$, Beauville (\cite{Beauville1986}) showed a rational vanishing $G^{d+1}(A)\otimes\Q=0$. This result has been generalized by Denninger and Murre (\cite{Denninger/Murre}) to abelian varieties defined over  arbitrary fields. We believe that this vanishing could hold integrally.  Suppose that $A$ is self-dual. We suggest a possible strategy to prove such a vanishing result in this case. Consider the isomorphism $g:A(k)\xrightarrow{\simeq} \Pic^0(A)$ constructed using a principal polarization. We think of the elements of $\Pic^0(A)$ as divisors and we consider the intersection product,
\begin{eqnarray*}&&\overbrace{\Pic^0(A)\otimes\cdots\otimes\Pic^0(A)}^d\to\CH_0(A)\\
&&D_1\otimes D_2\otimes\cdots\otimes D_d\mapsto D_1\cdot D_2\cdots\cdot D_d.
\end{eqnarray*} Precomposing with $g^{\otimes d}$ yields a map $\displaystyle\overbrace{A(k)\otimes\cdots\otimes A(k)}^d\to\CH_0(A)$. Let $a_1,\cdots, a_d\in A(k)$. Each $a_i\in A(k)$ gives rise to a zero-cycle $[a_i]-[0]\in F^1(A)$. We believe that applying $g$ interchanges the intersection product with the Pontryagin product. Namely, 
 \[g(a_1)\cdot g(a_2)\cdots \cdot g(a_d)=([a_1]-[0])\odot([a_2]-[0])\odot\cdots\odot([a_d]-[0]).\]
If this formula is true, this will automatically imply $G^{d+1}(A)=0$, for otherwise the map
\begin{eqnarray*}
&&\overbrace{A(k)\times\cdots\times A(k)}^d\to G^d(A)\\
&&(a_1,\ldots,a_d)\mapsto ([a_1]-[0])\odot([a_2]-[0])\odot\cdots\odot([a_d]-[0])
\end{eqnarray*} won't be multilinear. It is likely that such a result could follow from an integral version of the Fourier Mukai transform (see \cite[Remark 4.6]{Gazaki2015}). 
\end{rem}

\vspace{2pt}
\section{Results over $p$-adic fields}\label{padicsection} Throughout this section $k$ will be a finite extension of the $p$-adic field $\Q_p$, where $p$ is an \textbf{odd} prime. For an abelian group $B$ we will denote by $B_{\dv}$ its maximal divisible subgroup. Divisible groups are injective $\Z$-modules, so we have an isomorphism $B\simeq B_{\dv}\oplus B/B_{\dv}$.
\begin{notn}\label{nd}
For an abelian group $B$ we will denote by $B_{\nd}$ the non-divisible quotient $B/B_{\dv}$, which we consider as a subgroup of $B$. 
\end{notn} \autoref{mainintro2} will follow from \cite{Gazaki/Leal2022} and the following easy lemmas. 
\begin{lem}\label{divisibilitylemmas} Let $B$ be an abelian group. 
\begin{enumerate}
\item[(a)] Let $H$ be a subgroup of $B$. Suppose that $B_{\nd}$ is either a torsion group of finite exponent, or a finite group. Then the same holds for $(B/H)_{\nd}$. 
\item[(b)] Suppose that $B$ has a subgroup $B'$ such that $B/B'$ is  torsion of finite exponent. Assume additionally that the group $B'_{\nd}$ is torsion of finite exponent. Then the same holds for $B_{\nd}$. 
\end{enumerate}
\end{lem}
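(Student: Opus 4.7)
The plan rests on the single structural fact that the homomorphic image of a divisible group is again divisible, together with the defining property that any divisible subgroup of $C$ is contained in $C_{\dv}$.

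For part (a), I would consider the composition
\[
B \xrightarrow{q_1} B/H \xrightarrow{q_2} (B/H)_{\nd} = (B/H)/(B/H)_{\dv}.
\]
The image $q_1(B_{\dv})$ is a divisible subgroup of $B/H$, hence contained in $(B/H)_{\dv}$, so $q_2 \circ q_1$ kills $B_{\dv}$ and factors through a homomorphism $\overline{q}:B_{\nd}\to (B/H)_{\nd}$. Since $q_1$ and $q_2$ are both surjective, so is $\overline{q}$. Thus $(B/H)_{\nd}$ is a quotient of $B_{\nd}$, and any quotient of a torsion group of finite exponent (resp.\ a finite group) has the same property. This is entirely formal.

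For part (b), the idea is to sandwich $B_{\nd}$ between two controllable groups. The image of $B'$ in $B_{\nd} = B/B_{\dv}$ is $(B'+B_{\dv})/B_{\dv} \cong B'/(B'\cap B_{\dv})$, and the resulting quotient is $B/(B'+B_{\dv})$, which is a quotient of $B/B'$ and hence torsion of finite exponent (say dividing $m$). This produces a short exact sequence
\[
0 \to B'/(B'\cap B_{\dv}) \to B_{\nd} \to B/(B'+B_{\dv}) \to 0.
\]
The key observation for the left term is that $B'_{\dv}$ is a divisible subgroup of $B$, hence $B'_{\dv} \subseteq B_{\dv}$, so $B'_{\dv} \subseteq B'\cap B_{\dv}$. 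Consequently $B'/(B'\cap B_{\dv})$ is a quotient of $B'/B'_{\dv} = B'_{\nd}$, which by hypothesis is torsion of finite exponent (say dividing $n$). An extension of a group of exponent dividing $m$ by a group of exponent dividing $n$ is of exponent dividing $mn$, so $B_{\nd}$ is torsion of finite exponent.

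The only potentially delicate point is the subtlety that $B'\cap B_{\dv}$ need not itself be a divisible group, which is why I do not try to identify it with $B'_{\dv}$ directly; instead I only use the easy inclusion $B'_{\dv}\subseteq B'\cap B_{\dv}$ to get the surjection $B'_{\nd}\twoheadrightarrow B'/(B'\cap B_{\dv})$. Once this is noted, everything reduces to a diagram chase, and no further input is required.
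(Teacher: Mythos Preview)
Your proof is correct and follows essentially the same approach as the paper: for (a) both arguments observe that the surjection $B\twoheadrightarrow B/H$ carries $B_{\dv}$ into $(B/H)_{\dv}$ and hence induces a surjection $B_{\nd}\twoheadrightarrow (B/H)_{\nd}$; for (b) the paper does a direct element chase (if $nb\in B'$ and $N(nb)\in B'_{\dv}\subseteq B_{\dv}$ then $nN\,B_{\nd}=0$) while you package the same two observations into a short exact sequence, but the underlying logic and the resulting bound $mn$ on the exponent are identical.
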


\begin{proof}
We first prove (a). Let $\varepsilon: B\to B/H$ be the projection. Since the image of a divisible group under a group homomorphism is divisible, $\varepsilon$ restricts to a homomorphism $B_{\dv}\xrightarrow{\varepsilon} (B/H)_{\dv}$. We have a commutative diagram 
\[\xymatrix{ 0 \ar[r] & B_{\dv}\ar[r]\ar[d]^{\varepsilon} & B\ar[r]\ar[d]^{\varepsilon} & B_{\nd}\ar[r]\ar@{-->}[d]^{\varepsilon} & 0\\
0 \ar[r] & (B/H)_{\dv}\ar[r] & B/H\ar[r] & (B/H)_{\nd}\ar[r] & 0.
}
\] Since the middle vertical map is surjective, so is the rightmost vertical map. 
 By assumption the group $B_{\nd}$ is torsion of finite exponent. Surjectivity implies that the same holds for $(B/H)_{\nd}$. The same argument applies if $B_{\nd}$ is finite. 
 
 We next prove (b). Suppose that $B/B'$ is torsion of exponent $n\geq 1$. Moreover, let $N\geq 1$ be such that $NB'_{\nd}=0$. Let $b\in B$. It follows that $nb\in B'$. Moreover, $Nnb\in B'_{\dv}$. Since $B'_{\dv}\leq B_{\dv}$, it follows that $nN B_{\nd}=0$. 

\end{proof}

\begin{lem}\label{basechange} Let $X$ be a smooth projective variety over $k$. Suppose there exists a finite extension $K/k$ such that $F^2(X_K)$ is the direct sum of its maximal divisible subgroup with a torsion group of finite exponent. Then the same holds for $F^2(X)$. 
\end{lem}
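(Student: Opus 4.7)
The plan is to exploit the push-forward and flat pull-back attached to the base-change morphism $\pi := \pi_{K/k}: X_K \to X$, together with both parts of \autoref{divisibilitylemmas}. Set $n := [K:k]$.

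My first step is to verify that $\pi_\star$ and $\pi^\star$ restrict to maps between the Albanese kernels. For the degree map, the standard formulas $\deg_X \circ \pi_\star = n \cdot \deg_{X_K}$ and $\deg_{X_K} \circ \pi^\star = \deg_X$ show preservation of $F^1$. For the Albanese map, the identities $\alb_X \circ \pi_\star = N_{K/k} \circ \alb_{X_K}$ (using that $\Alb_{X_K} = (\Alb_X)_K$ and the norm $N_{K/k}$) and $\alb_{X_K} \circ \pi^\star = \res_{K/k} \circ \alb_X$ yield $\pi_\star(F^2(X_K)) \subseteq F^2(X)$ and $\pi^\star(F^2(X)) \subseteq F^2(X_K)$. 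Since $\pi$ is finite flat of degree $n$, I also have $\pi_\star \circ \pi^\star = n \cdot \id$ on $\CH_0(X)$, and hence on $F^2(X)$.

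Next, I set $H := \pi_\star(F^2(X_K)) \subseteq F^2(X)$. For any $z \in F^2(X)$ the identity $nz = \pi_\star(\pi^\star z) \in H$ forces $F^2(X)/H$ to be annihilated by $n$, so in particular torsion of finite exponent. By hypothesis $F^2(X_K)_{\nd}$ is torsion of finite exponent, and since $H$ is a quotient of $F^2(X_K)$, \autoref{divisibilitylemmas}(a) gives that $H_{\nd}$ is torsion of finite exponent. Applying \autoref{divisibilitylemmas}(b) with $B = F^2(X)$ and $B' = H$ then yields that $F^2(X)_{\nd}$ is torsion of finite exponent.

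To conclude, since the maximal divisible subgroup $F^2(X)_{\dv}$ is injective as a $\Z$-module, the short exact sequence
\[
0 \to F^2(X)_{\dv} \to F^2(X) \to F^2(X)_{\nd} \to 0
\]
splits, producing the desired direct sum decomposition. The only non-mechanical ingredient is the verification that $\pi_\star$ and $\pi^\star$ preserve $F^2$ via Albanese functoriality; beyond that, the proof is a direct double application of \autoref{divisibilitylemmas}, and I expect no serious obstacle.
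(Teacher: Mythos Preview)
Your argument is correct and follows essentially the same route as the paper: restrict $\pi_{K/k\star}$ and $\pi_{K/k}^\star$ to the Albanese kernels via functoriality, set $H=\pi_{K/k\star}(F^2(X_K))$, use $\pi_{K/k\star}\circ\pi_{K/k}^\star=[K:k]$ to see that $F^2(X)/H$ is $[K:k]$-torsion, and then apply \autoref{divisibilitylemmas}(a) followed by (b). The paper phrases the compatibility with $\alb$ via a commutative diagram and the universal property of the Albanese variety rather than the explicit norm/restriction identities you wrote, but the content is identical.
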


\begin{proof} Consider the push-forward, $\CH_0(X_K)\xrightarrow{\pi_{K/k\star}}\CH_0(X)$, and the pull-back, $\CH_0(X)\xrightarrow{\pi_{K/k}^\star}\CH_0(X_K)$ induced by the proper and flat projection $\Spec(K)\to\Spec(k)$. These satisfy the equality $\pi_{K/k\star}\circ\pi_{K/k}^\star=[K:k]$. We have a commutative diagram 

\[\xymatrix{ 0 \ar[r] & F^2(X)\ar[r]\ar@{-->}[d]^{\pi_{K/k}^\star} & F^1(X)\ar[r]^{\alb_X}\ar[d]^{\pi_{K/k}^\star} & \Alb_X(k)\ar[r]\ar[d] & 0\\
0 \ar[r] & F^2(X_K)\ar[r] & F^1(X_K)\ar[r]^{\alb_{X_K}} & \Alb_{X_K}(K)\ar[r] & 0,
}
\] and a similar diagram with the vertical arrows reversed for $\pi_{K/k\star}$. Here the rightmost vertical map is induced by the universal property of the Albanese variety. Thus, we obtain homomorphisms $F^2(X_K)\xrightarrow{\pi_{K/k\star}}F^2(X)$ and $F^2(X)\xrightarrow{\pi_{K/k}^\star}F^2(X_K)$. Let \[H=\img(F^2(X_K)\xrightarrow{\pi_{K/k\star}}F^2(X))\leq F^2(X).\] Then $F^2(X)/H$ is $[K:k]$-torsion.
Since by assumption $F^2(X_K)_{\nd}$ is torsion of finite exponent, it follows by \autoref{divisibilitylemmas} (a) that so is the group $H_{\nd}$. Then it follows by \autoref{divisibilitylemmas} (b) that $F^2(X)_{\nd}$ is torsion of finite exponent.

\end{proof}

We next prove \autoref{mainintro2} which we restate below.

\begin{theo}\label{main2} Let $X=C_1\times\cdots\times C_d$ be a product of smooth projective curves over the $p$-adic field $k$.  Let $J_i$ be the Jacobian variety of $C_i$ for $i=1,\ldots,d$. Suppose that there exists a finite extension $K/k$ over which for each $i\in\{1,\ldots,d\}$ there exists an isogeny $\psi_i:J_i\otimes_k K\to E_{i,1}\times\cdots\times E_{i,r_i}$  to a product $Y_i:=E_{i,1}\times\cdots\times E_{i,r_i}$ of elliptic curves all of which have either good reduction or split multiplicative reduction. Suppose that for each $i\in\{1,\ldots,d\}$, $\check{\psi}_i\circ\psi_i=n_i$ is an integer coprime to $p$. 
\begin{enumerate}
\item Suppose that for at most one $i\in\{1,\ldots,d\}$ there exist indexes $l$ with $l\in\{1,\ldots, r_i\}$ such that the elliptic curve $E_{i,l}$ has good supersingular reduction.  
\item Under the same assumptions as in part (1), if $K=k$ is an unramified extension of $\Q_p$ and all the elliptic curves involved have good reduction, then $F^2(X)$ is $p$-divisible. 
\item Suppose that each $Y_i$ has potentially good ordinary reduction. Then there exists a finite extension $L/k$ such that the cycle class map 
\[\CH_0(X_L)/p^n\xrightarrow{c_{p^n}} H^{2d}_{\text{\'{e}t}}(X_L,\mu_{p^n}^{\otimes d})\] to \'{e}tale cohomology is injective for every $n\geq 1$. 
\end{enumerate}
\end{theo}
\begin{proof}
 For statements (1), (3) we are allowed to extend to a finite extension. Thus, we may assume that the isogenies $\psi_i$ are defined over $k$, all elliptic curves $E_{i,j}$ have either good or split multiplicative reduction, and $C_i(k)\neq\emptyset$ for $i=1,\ldots, d$. It then follows by \cite{Raskind/Spiess2000} that there is an isomorphism
\[F^2(X)\simeq\bigoplus_{\nu=2}^d\bigoplus_{1\leq i_1<i_2<\cdots<i_\nu}K(k;J_{i_1},\ldots, J_{i_\nu}),\] where $K(k;J_{i_1},\ldots, J_{i_\nu})$ is the Somekawa $K$-group attached to $J_{i_1},\ldots, J_{i_\nu}$.

We first prove (1). It follows by \cite[Theorem 3.5]{Raskind/Spiess2000} that each group $K(k;J_{i_1},\ldots, J_{i_\nu})$ is the direct sum of a finite group and a group divisible by any integer $m$ coprime to $p$. Thus, the same holds for $F^2(X)$. It remains to show that the group $\varprojlim\limits_n K(k;J_{i_1},\ldots, J_{i_\nu})/p^n$ is torsion of finite exponent (see \cite[Lemma 3.4.4]{Raskind/Spiess2000} for a proof of this claim). 
By enlarging the base field if necessary we make the following assumptions: $E_{i,j}[p]\subset E_{i,j}(k)$ for all $i,j$, and $\mu_{p^2}\subset k^\times$. Under these assumptions we will in fact show that the group $\varprojlim\limits_n K(k;J_{i_1},\ldots, J_{i_\nu})/p^n$ is  finite. 
For every $n\geq 1$ we have a surjection 
\[(J_{i_1}\otimes^M\ldots\otimes^M J_{i_\nu})(k)/p^n\twoheadrightarrow K(k;J_{i_1},\ldots, J_{i_\nu})/p^n.\] Thus, if we show $(J_{i_1}\otimes^M\ldots\otimes^M J_{i_\nu})(k)/p^n=(J_{i_1}\otimes^M\ldots\otimes^M J_{i_\nu})(k)/p^m$ for $n,m$ large enough, the same will be true for $K(k;J_{i_1},\ldots, J_{i_\nu})$. In particular, if $\varprojlim\limits_n (J_{i_1}\otimes^M\ldots\otimes^M J_{i_\nu})(k)/p^n$  is finite, then $\varprojlim\limits_n K(k;J_{i_1},\ldots, J_{i_\nu})/p^n$ is  finite. 


We fix an index $i\in\{1,\ldots,d\}$. Let $L/k$ be an arbitrary finite extension. The isogeny $\psi_i: J_i\to E_{i,1}\times\cdots\times E_{i,r_i}$ induces an isomorphism 
\[J_i(L)/p^n\xrightarrow{\simeq} E_{i,1}(L)/p^n\oplus\cdots\oplus E_{i,r_i}(L)/p^n.\] For, we have equalities $\check{\psi}_i\circ\psi_i=n_i$ which are assumed to be coprime to $p$. Moreover, notice that the above isomorphism commutes with norm and restrictions maps. Namely, if $L/K/k$ is a tower of finite extensions, then the diagrams 
\[\begin{tikzcd}
	J_{i}(L)/p^n\ar{r}{\psi_i}\ar{d}{N_{L/K}}  & \bigoplus_{j=1}^{r_i} E_{i,j}(L)/p^n\ar{d}{N_{L/K}} \\
	J_{i}(K)/p^n\ar{r}{\psi_i} & \bigoplus_{j=1}^{r_i} E_{i,j}(K)/p^n
	\end{tikzcd},\] and 
	\[\begin{tikzcd}
	J_{i}(L)/p^n\ar{r}{\psi_i}  & \bigoplus_{j=1}^{r_i} E_{i,j}(L)/p^n \\
	J_{i}(K)/p^n\ar{r}{\psi_i} \ar{u}{\res_{L/K}} & \bigoplus_{j=1}^{r_i} E_{i,j}(K)/p^n \ar{u}{\res_{L/K}}
	\end{tikzcd}\] commute.
	 This means that we have an isomorphism of Mackey functors $J_i/p^n\simeq \bigoplus_{j=1}^{r_i} E_{i,j}/p^n$ for every $i\in\{1,\ldots,d\}$. Since $\otimes^M$ is a product structure in the abelian category of Mackey functors (cf.~\cite{Kahn1992}), the group $(J_{i_1}\otimes^M\ldots\otimes^M J_{i_\nu})(k)/p^n$ is a finite direct sum of groups of the form $(E_{i_1, l_1}\otimes^M\cdots\otimes^M E_{i_\nu, l_\nu})(k)/p^n$
with the indexes $l_j$ running in the set $\{1,\ldots,r_{i_j}\}$ for $j=1,\ldots,\nu$. Thus, it suffices to show each $\varprojlim\limits_n(E_{i_1, l_1}\otimes^M\cdots\otimes^M E_{i_\nu, l_\nu})(k)/p^n$ is finite. By our assumption, if there exist curves $E_{i_j, l_j}$, $E_{i_k, l_k}$ with good supersingular reduction, then $i_j=i_k$. In particular, in each Mackey product $(E_{i_1, l_1}\otimes^M\cdots\otimes^M E_{i_\nu, l_\nu})(k)$ there is at most one elliptic curve with good supersingular reduction.  Because of this, the finiteness of $\varprojlim\limits_n (E_{i_1, l_1}\otimes^M\ldots\otimes^M E_{i_\nu, l_\nu})(k)/p^n$ when $\nu=2$ follows by \cite[Theorem 3.24]{Gazaki/Leal2022} and its proof. For $\nu\geq 3$ this follows by \cite[Corollary 3.28]{Gazaki/Leal2022}. The rest of statement (1) follows by \autoref{basechange}. 

To show (2) we assume that $k=K$ is an unramified extension of $\Q_p$. It follows by \cite[Theorem 1.4, Corollary 1.5]{Gazaki/Hiranouchi2021} that the group $\varprojlim\limits_n(E_{i_1, l_1}\otimes^M\cdots\otimes^M E_{i_\nu, l_\nu})(k)/p^n$ is zero, which implies that $F^2(X)$ is $p$-divisible. 

Lastly, we show (3). It follows by \cite[Proposition 2.4]{Yamazaki2005}
that the injectivity of the cycle map $\CH_0(X)/p^n\xrightarrow{c_{p^n}} H^{2d}_{\text{\'{e}t}}(X,\mu_{p^n}^{\otimes d})$ can be reduced to proving injectivity of the Galois symbols
\[K(k;J_{i_1},\ldots,J_{i_\nu})/p^n\xrightarrow{s_{p^n}} H^{2\nu}(k,J_{i_1}[p^n]\otimes\cdots\otimes J_{i_\nu}[p^n]),\] for every $1\leq i_1<i_2<\cdots<i_\nu\leq d$. For a definition of the Galois symbol $s_{p^n}$ we refer to \cite[Section 2.1]{Gazaki/Leal2022}. The isogenies $\psi_i: J_i\to Y_i$ are defined over $k$ and for each $i\in\{1,\ldots,d\}$ it follows that $\ker(\psi_i)\subset J_i[m_i]$ for some positive integer $m_i$ coprime to $p$. Thus, $\psi_i$ induces an isomorphism of $\Gal(\overline{k}/k)$-modules $\psi_i:J_i[p^n]\xrightarrow{\simeq} E_{i,1}[p^n]\oplus\cdots\oplus E_{i,r_i}[p^n]$ for every $n\geq 1$.  
Using the analysis we did in part (1) we conclude that injectivity of $c_{p^n}$ can be reduced to injectivity of the Galois symbols 
\[K(k;E_{i_1,l_1},\ldots, E_{i_\nu, l_\nu})/p^n\xrightarrow{s_{p^n}}H^{2\nu}(k,E_{i_1,l_1}[p^n]\otimes\cdots\otimes E_{i_\nu, l_\nu}[p^n]),\] for all $1\leq i_1<i_2<\cdots<i_\nu\leq d$ and $l_j$ running in the set $\{1,\ldots,r_{i_j}\}$ for $j=1,\ldots,\nu$. When $\nu\geq 3$, it follows by \cite[Corollary 3.28]{Gazaki/Leal2022} that there exists a finite extension $K/k$ such that the group $K(K;E_{i_1,l_1},\ldots, E_{i_\nu, l_\nu})$ is $p$-divisible, which forces the corresponding Galois symbol to be injective. The case $\nu=2$ follows by \cite[Theorem 3.14, Corollary 3.16]{Gazaki/Leal2022}.

\end{proof}

\begin{cor} Consider the set-up of \autoref{main2} and suppose that $d=2$, so that $X=C_1\times C_2$ is a surface. Then $F^2(X)_{\nd}$ is finite unconditionally on the base field. 
\end{cor}
\begin{proof}
It follows by \autoref{main2} (1) that $F^2(X)_{\nd}$ is torsion of finite exponent. Let $N\geq 1$ be an integer such that $NF^2(X)_{\nd}=0$. It follows that $F^2(X)_{\nd}$ is a subgroup of $\CH_0(X)_N$, which is finite by \cite[Theorem 8.1]{CT93}.  

\end{proof}

\begin{rem} Part (2) of \autoref{main2} verifies a conjecture proposed by the author and T. Hiranouchi in \cite[Conjecture 1.3]{Gazaki/Hiranouchi2021}, motivated by local-to-global considerations. The proof can be easily extended to the case when $K/k/\Q_p$ is a tower of unramified extensions with $[K:k]$ coprime to $p$. When all elliptic curves involved in the isogenies have good reduction, the groups $K(k;E_{i_1,l_1},\ldots, E_{i_\nu, l_\nu})$ are $m$-divisible for every integer $m$ coprime to $p$ (cf.~\cite[Corollary 3.5.1]{Raskind/Spiess2000}). Thus, we in fact obtain a vanishing $F^2(C_1\times\cdots\times C_d)_{\nd}=0$. 
\end{rem}
In \cite[Theorem 2.5]{Gazaki2019} the author proved that for an abelian variety $A$ over a $p$-adic field $k$ with good ordinary reduction the group $F^2(A)_{\nd}$ is torsion. In this article, using some of the computations in \autoref{comparesection}, we extend this result to the case of an abelian variety $A$ isogenous to a product of elliptic curves allowing one of the curves to have good supersingular reduction.  
\begin{prop}\label{mainab}
Let $A$ be a self-dual abelian variety 
 over a $p$-adic field $k$ admitting an isogeny $\varphi:A\to E_1\times\cdots\times E_d$ to a product of elliptic curves. Suppose that $\check{\varphi}\circ\varphi=n$ is an integer coprime to $p$. Assume further that each $E_i$ has good reduction and at most one $E_i$ has good supersingular reduction. Then $F^2(A)_{\nd}$ is torsion. 
 \end{prop}
 \begin{proof}
 The proof of \cite[Theorem 2.5]{Gazaki2019} will carry through verbatim if we show that for each $r\geq 3$ the Somekawa $K$-group $S_r(k;A)$ is divisible and $S_2(k;A)_{\nd}$ is finite.  First, it follows by \cite[Theorem 3.5]{Raskind/Spiess2000} that  for all $r\geq 2$ the group $S_r(k;A)$ is the direct sum of a finite group and a group divisible by any integer $m$ coprime to $p$. It remains to show that $\varprojlim\limits_n S_r(k;A)/p^n$  is finite. First notice that the group $\varprojlim\limits_n S_r(k;E_1\times\cdots\times E_d)/p^n$ is finite. For, it follows by \autoref{Sr} that  there is an isomorphism
 \[S_r(k;E_1\times\cdots\times E_d)\xrightarrow{\simeq}\bigoplus_{1\leq i_1\leq\cdots\leq i_r\leq d}S_r(k;E_{i_1},\ldots, E_{i_r}).\] Since each $E_i$ has good reduction and $p$ is odd, it follows that $S_r(k;E_{i_1},\ldots, E_{i_r})=0$ whenever two of the indexes are the same (since it's $2$-torsion and $2$-divisible). Thus, \[S_r(k;E_1\times\cdots\times E_d)\xrightarrow{\simeq}\bigoplus_{1\leq i_1<\cdots< i_r\leq d}K(k;E_{i_1},\ldots, E_{i_r}).\] In each of the $K$-groups $K(k;E_{i_1},\ldots, E_{i_r})$ at most one of the elliptic curves has good supersingular reduction. Thus, each $\varprojlim\limits_n K(k;E_1,\cdots, E_d)/p^n$ is finite by \cite[Theorem 3.14, Corollary 3.16]{Gazaki/Leal2022}, and hence so is $\varprojlim\limits_n S_r(k;E_1\times\cdots\times E_d)/p^n$.  The isogeny $\varphi$ induces a homomorphism for all $n\geq 1$,
 \[S_r(k;A)/p^n\xrightarrow{\varphi^{\otimes r}}S_r(k;E_1\times\cdots\times E_d)/p^n.\] Since $\check{\varphi}^{\otimes r}\circ\varphi^{\otimes r}$ is multiplication by $n^r$, which is coprime to $p$, and so is $\varphi^{\otimes r}\circ\check{\varphi}^{\otimes r}$, this map is an isomorphism for all  $n\geq 1$, and hence the claim follows. 

 \end{proof}
 
 \begin{rem}\label{finiteness} If an integral vanishing $G^{d+1}(A)=0$ is indeed true (cf.~\autoref{interchange}), this will automatically improve \autoref{mainab} and give new infinite classes of abelian varieties for which the full \autoref{mainconj} holds. Namely, \autoref{maincompare} gives an equality 
\[\displaystyle F^{d+1}(A)\otimes\Z\left[\frac{1}{2n}\right]=G^{d+1}(A)\otimes\Z\left[\frac{1}{2n}\right]=0.\] Let $l_1,\ldots,l_m$ be the prime numbers that appear in the prime factorization of $2n$. Then the group  $F^{d+1}(A)$ is torsion and in particular,
\[F^{d+1}(A)=F^{d+1}(A)\{l_1\}\oplus\cdots\oplus F^{d+1}(A)\{l_m\}.\] Since we assumed that $p$ is odd and coprime to $n$, it follows that each of the subgroups $F^{d+1}(A)\{l_j\}$, for $j=1,\ldots,m$, is $p$-divisible, and hence so is $F^{d+1}(A)$. Assume we are in the set-up of \autoref{mainab} and suppose for simplicity that $k$ is an unramified extension of $\Q_p$. Then it follows by \cite[Theorem 1.4, Corollary 1.5]{Gazaki/Hiranouchi2021} that each of the quotients $F^r(A)/F^{r+1}(A)\simeq S_r(k;A)$ is $p$-divisible for $r\geq 2$. We conclude that $F^2(A)$ is $p$-divisible. Combined  with \cite[Theorem 3.5]{Raskind/Spiess2000}, this shows that \autoref{mainconj} will be true in this case. 
 \end{rem}

\vspace{3pt}
\subsection{Explicit new evidence for \autoref{mainconj}}\label{computations} Let $E_1, E_2$ be two elliptic curves over a perfect field $k$ such that $E_i[2]\subset E_i(k)$ for $i=1,2$. The curve $E_1$ is isomorphic to a curve $E_{a,b}$ given by a Weierstrass equation of the form 
$E_{a,b}:y^2=x(x-a)(x-b)$ with $a,b\in k, a\neq b$. Similarly, $E_2$ is isomorphic to a curve $E_{c,d}:y^2=x(x-c)(x-d)$ for some $c,d\in k$, $c\neq d$. We consider the curve $C_{a,b,c,d}$ constructed in \cite{Scholten} given by the equation 
\[C_{a,b,c,d}:(ad-bc)y^2=((a-b)x^2-(c-d))(ax^2-c)(bx^2-d),\] which admits dominant morphisms $h_i:C_{a,b,c,d}\to E_i$ for $i=1,2$. The isomorphisms \[E_{a,b}\simeq E_{b,a}\simeq E_{-b,b-a}\simeq E_{b-a,-a}\simeq E_{a-b,-b}\simeq E_{-b,a-b}\] produce typically six non-isomorphic curves 
 which in the very general case are smooth of genus $2$ (cf.~\cite[Section 4.2.2]{GazakiLove2022}). Notice that even when $E_1=E_2$, some of the six curves are smooth. Thus, for any given pair $E_1,E_2$ as above we can find up to six non-isomorphic hyperelliptic curves of genus $2$ whose Jacobian $J_C$ is isogenous over $k$ to the product $E_1\times E_2$.
 
  One can use this construction and  \autoref{main2} to obtain infinitely many new examples of surfaces satisfying \autoref{mainconj}. We consider products $C\times E$, where $E$ is any other elliptic curve. We can easily choose $E_1,E_2, E$ in such a way so that the reduction assumption of \autoref{main2} is satisfied. 
We note that for an elliptic curve $E$ over a number field $F$ without potential complex multiplication a result of Serre (cf.~\cite{Serre1981}) shows that the set of places of $F$ at which $E$ has good supersingular reduction is a set of places of density zero. In the case $F=\Q$ it is actually an infinite set (cf.~\cite{Elkies1987})), but it is not known if this is also the case for an arbitrary number field. On the other hand, if $E$ has potential CM, the set of supersingular primes is a set of density $1/2$.


\vspace{15pt}

\bibliographystyle{amsalpha}

\bibliography{bibfile}

\end{document}